\def\comment{}
\def\endcomment{}
\newtheorem{lemma}{Lemma}
\newtheorem{prop}[lemma]{Proposition}
\newtheorem{theorem}[lemma]{Theorem}
\theoremstyle{definition}
\newtheorem{defn}[lemma]{Definition}
\newtheorem{remark}[lemma]{Remark}
\newtheorem*{notation}{Notation}
\newcommand{\A}{\mathbb{A}}
\renewcommand{\P}{\mathbb{P}}
\newcommand{\circleThroughThreePoints}[4]{
\coordinate (middle1) at ($(#1)!.5!(#2)$);
\coordinate (middle2) at ($(#2)!.5!(#3)$);
\coordinate (aux1) at ($(middle1)!1!90:(#2)$);
\coordinate (aux2) at ($(middle2)!1!90:(#3)$);
\coordinate (center) at ($(intersection of middle1--aux1 and middle2--aux2)$);
\node at (center) [draw, circle through=(#1), name path=#4] {};
}
\newcommand{\circleThroughThreePointsWLabel}[6]{
\coordinate (middle1) at ($(#1)!.5!(#2)$);
\coordinate (middle2) at ($(#2)!.5!(#3)$);
\coordinate (aux1) at ($(middle1)!1!90:(#2)$);
\coordinate (aux2) at ($(middle2)!1!90:(#3)$);
\coordinate (center) at ($(intersection of middle1--aux1 and middle2--aux2)$);
\node at (center) 
[draw=green,label=#5:$#6$, circle through=(#1), name path=#4] {};
}
\newcommand{\circleThroughPointTangentWLabel}[6]{
\coordinate (middle1) at ($(#1)!.5!(#2)$);
\coordinate (aux1) at ($(middle1)!1!90:(#2)$);
\coordinate (aux2) at ($(#2)!1!90:(#3)$);
\coordinate (center) at ($(intersection of middle1--aux1 and #2--aux2)$);
\node at (center) [draw=green,label=#5:$#6$, circle through=(#1), name path=#4] {};
}
\def\lijn(#1,#2){\langle #1, #2 \rangle}
\title{Circle incidence theorems}
\author{J. Chris Fisher}
\address{2616 Edgehill Road\\                                     
Cleveland Heights, OH 44106-2806  \\
USA}
\email{fisher@math.uregina.ca}
\author{Eberhard M. Schr\"oder} 
\address{FB Mathematik der Universit\"at\\
Bundesstrasse 55\\
D 20146 Hamburg\\
Germany}
\email{eberhard.schroeder@gmx.net}
\author{Jan Stevens}
\address{Mathematical Sciences\\
University of Gotheburg\\
Chalmers University of Technology\\
SE 412 96 Gothenburg\\
Sweden}
\email{stevens@chalmers.se}
\begin{document}
\maketitle

Given a triangle, there are 
unexpected triples of lines that pass
through one point; e.g, the three medians, altitudes, and angle 
bisectors are all concurrent.
Larry Hoehn discovered a remarkable 
concurrence theorem about pentagons,
illustrated in Figure \ref{fivecirclefig}, see \cite{FHS1}.
In this note we prove a generalization to $n$-gons.

Let $A_1$, \dots, $A_n$ be $n$ points in the plane, no three on a
line, and such that the lines $l_{i+1}=\langle A_i,A_{i+2}\rangle$
and  $l_{i}=\langle A_{i-1},A_{i+1}\rangle$ are not parallel,
where we consider the indices modulo $n$. 
Let $B_{i,i+1}$ be the
intersection point of $l_i$ and $l_{i+1}$. Through the three
points $A_i$, $B_{i,i+1}$ and $A_{i+1}$ passes a unique
circle $c_{i,i+1}$. Let $g_i$ be the radical axis of 
the two consecutive circles $c_{i-1,i}$ and $c_{i,i+1}$.

\begin{theorem}[\cite{FHS1}]
Given five 
points $A_1$, \dots, $A_5$ in the plane the five radical axes
$g_1$, \dots, $g_5$, constructed as above
are concurrent or parallel.
\end{theorem}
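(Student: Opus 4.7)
The strategy is to convert the concurrency statement into a condition on the five circles themselves. Writing each in the normalised form $F_i(X) = \|X\|^2 + D_i x + E_i y + C_i$, the value $F_i(X)$ equals the power of $X$ with respect to $c_{i,i+1}$, and $g_i$ is the zero set of $F_{i-1}-F_i$. Thus the five $g_i$ concur at a finite point $P$ precisely when $F_1(P)=\cdots=F_5(P)$, i.e.\ when the five circles admit a common power point; the parallel case is the projective limit in which that common point lies at infinity. Equivalently, the five vectors $(1, D_i, E_i, C_i)\in\mathbb{R}^4$ must span only a three-dimensional subspace. Observe also that $A_i\in g_i$ is automatic, since $A_i\in c_{i-1,i}\cap c_{i,i+1}$ forces $F_{i-1}(A_i) = F_i(A_i) = 0$.

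My candidate for the concurrency point is $P := g_1\cap g_2$, which by the radical-centre theorem is the unique point at which $c_{5,1}, c_{1,2}, c_{2,3}$ share a common power. It then suffices to verify the single further incidence $P\in g_3$, that is, $\mathrm{pow}(P,c_{3,4}) = \mathrm{pow}(P,c_{2,3})$: by the cyclic symmetry of the hypothesis the same argument with shifted indices gives $g_i\cap g_{i+1}\in g_{i+2}$ for every $i$, and combining these cyclic-shift identities forces all five $g_i$ to pass through $P$. The workhorse for such a verification is the power-of-a-point identity applied along the diagonals: the line $l_i = \langle A_{i-1}, A_{i+1}\rangle$ meets $c_{i-1,i}$ in $\{A_{i-1}, B_{i-1,i}\}$ and meets $c_{i,i+1}$ in $\{A_{i+1}, B_{i,i+1}\}$, so for any $X\in l_i$ the two powers are signed products of directed lengths along $l_i$. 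This reduces every power computation anchored on a diagonal to an elementary cross-ratio of collinear $A$'s and $B$'s.

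I expect the main obstacle to be algebraic bookkeeping rather than a single conceptual step. Because $P$ is constructed asymmetrically through the two specific radical axes $g_1, g_2$, the identity $P\in g_3$ does not collapse under an obvious five-fold automorphism and must be verified directly. In practice I would either pin down affine coordinates with $A_1 = (0,0)$ and $A_2 = (1,0)$, parameterise the remaining vertices $A_3, A_4, A_5$ and confirm the identity by symbolic computation; or, more intrinsically, exhibit the coplanarity of the five ``circle vectors'' $(1, D_i, E_i, C_i)\in\P^3$ as a single $4\times 4$ determinantal identity built directly from the formulae expressing each $B_{i,i+1}$ as the intersection $l_i\cap l_{i+1}$ of two diagonals.
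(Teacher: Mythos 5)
Your reductions are sound and in fact mirror the structure this paper relies on: identifying $g_i$ with the zero set of $F_{i-1}-F_i$, noting $A_i\in g_i$, taking $P=g_1\cap g_2$ as the radical centre of $c_{5,1}$, $c_{1,2}$, $c_{2,3}$, and observing that the single incidence $P\in g_3$ propagates cyclically to the remaining axes (provided consecutive axes are distinct, a case you should flag). Computing powers along the diagonals $l_i$, on which each relevant circle cuts out two of the marked points, is exactly the affine reformulation the paper works with: its Lemma \ref{intsecptlemma} encodes $C_i=g_i\cap l_i$ by such ratio identities, and Proposition \ref{centerprop} together with Remark \ref{computationproof} is precisely the coordinate computation you sketch.

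The genuine gap is that the one step carrying the entire content of the theorem --- the verification that $P\in g_3$, equivalently $\operatorname{pow}(P,c_{2,3})=\operatorname{pow}(P,c_{3,4})$, equivalently the vanishing of the $4\times4$ minors of your matrix of circle vectors --- is never performed. You say you \emph{would} confirm it by symbolic computation or exhibit it as a determinantal identity, but neither is done, and nothing in the preceding formal reductions makes the identity automatic: for six points the analogous statement is false in general (the paper's Theorem \ref{6axis} characterises exactly when it holds), so the identity must genuinely use the way the points $B_{i,i+1}$ arise as $l_i\cap l_{i+1}$ of the short diagonals, not just generic properties of radical axes. Until that computation (or a synthetic substitute such as the Ceva--Menelaus criterion of the paper's Lemma \ref{conditie}) is carried out, this is a plan for a proof rather than a proof. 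A secondary issue: the parallel case does not come for free as ``the projective limit''; if $g_1\parallel g_2$ there is no finite radical centre to anchor the cyclic argument, and one must argue separately, e.g.\ by showing the five circle centres are collinear.
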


\begin{figure}
\comment
\begin{tikzpicture}
[thick,scale=1.3,punt/.style={circle,fill=red,inner sep=1pt},
punk/.style={circle,fill=green,inner sep=1pt}]

\coordinate [label=above:$A_1$,punt] (A)  at (-0.6,2.8) ;
\coordinate [label=above left:$A_2$,punt] (B) at (0,0) ;
\coordinate [label=above right:$A_3$,punt] (C) at (3,0.5) ;
\coordinate [label=above left:$A_4$,punt] (D) at (3.5,3) ;
\coordinate [label=right:$A_5$,punt] (E) at (1.5,4.5)  ;

\draw [red] (A) -- (C) -- (E) -- (B) --(D) -- (A);

\coordinate [punk](A23) at (intersection of A--C and B--D);
\coordinate [punk](A34) at (intersection of B--D and C--E);
\coordinate [punk](A45) at (intersection of C--E and D--A);
\coordinate [punk](A15) at (intersection of D--A and E--B);
\coordinate [punk](A12) at (intersection of E--B and A--C);
\circleThroughThreePoints{B}{A23}{C}{c23};
\circleThroughThreePoints{C}{A34}{D}{c34};
\circleThroughThreePoints{D}{A45}{E}{c45};
\circleThroughThreePoints{B}{A12}{A}{c12};
\circleThroughThreePoints{A}{A15}{E}{c15};

\coordinate (AE) at ($(A)+2*($(B)-(D)$)$) ;
\coordinate (BC) at ($(B)+2*($(A)-(D)$)$) ;
\coordinate (CD) at ($(C)+2*($(B)-(E)$)$) ;
\coordinate (ED) at ($(E)+2*($(A)-(C)$)$) ;

\coordinate (a) at (intersection of B--BC  and E--ED);
\coordinate (b) at (intersection of C--CD  and A--AE);
\coordinate [punt ](M) at (intersection of  A--a and B--b);
\draw[blue] (E)--(M)--(A) (B)--(M)--(C) (M)--(D);

\end{tikzpicture}
\endcomment
\caption{The 5-circle theorem}\label{fivecirclefig}
\end{figure}
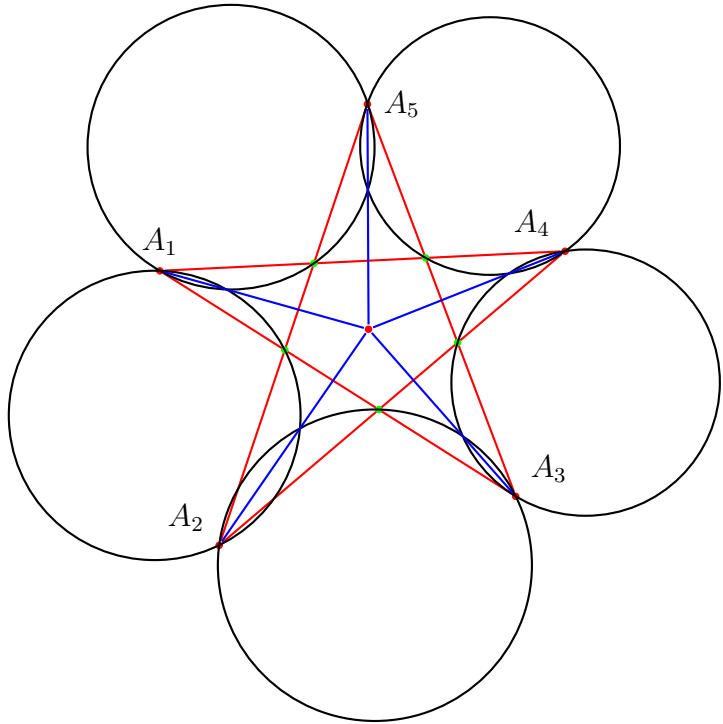

We use the terminology that lines 
\textit{lie in a pencil}
if they  are concurrent or parallel.
For $n\geq 6$ the radical axes in general  do not lie in a pencil.
For $n=6$ we show that 
it is necessary and sufficient that  the six points $B_{i,i+1}$ lie on a conic.
This is  equivalent
to the 
condition that the three lines $\lijn(A_i,A_{i+3})$ lie in a pencil. 
In fact, 
the initial six points have to be in a special position for just 
three consecutive axes to lie in a pencil:
Fisher, Hoehn and Schr\"oder showed that 
this condition
implies that than the remaining three axes lie in the same pencil \cite{FHS2}.
Our main result generalizes this to $n>6$.
\begin{theorem}
Let $A_1$, \dots, $A_n$ be $n$ points in the plane, no three on a
line, and such that the lines $l_{i-1}=\langle A_{i-1},A_{i+1}\rangle$ and 
$l_{i+1}=\langle A_i,A_{i+2}\rangle$ intersect in a point $B_{i,i+1}$ 
\textup(indices considered modulo $n$\textup). 
Let $c_{i,i+1}$ be the circle through $A_i$, $B_{i,i+1}$ and $A_{i+1}$,
and let $g_i$ be the radical axis of the circles $c_{i-1,i}$ and $c_{i,i+1}$.
\\
If the lines $g_1$, $g_2$, \dots, $g_{n-3}$ lie in a pencil, 
then the remaining three radical axes $g_{n-2}$, $g_{n-1}$ and $g_n$
lie in the same pencil.
\end{theorem}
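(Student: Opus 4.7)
The plan is to establish the theorem by induction on $n$, using the $n=6$ case (the Fisher--Hoehn--Schr\"oder theorem \cite{FHS2}) as the base.

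A key preliminary observation is that each radical axis $g_i$ passes through the vertex $A_i$: since $A_i$ lies on both circles $c_{i-1,i}$ and $c_{i,i+1}$, it has zero power with respect to each, so it sits on their radical axis. Fix a common point $P$ of the pencil, working projectively so that $P$ may lie at infinity when the axes are parallel. Writing $\pi_i := \mathrm{pow}(P, c_{i,i+1})$, the condition $P\in g_i$ becomes the single scalar equation $\pi_{i-1}=\pi_i$. Thus the hypothesis translates to the chain $\pi_n=\pi_1=\pi_2=\cdots=\pi_{n-3}$, and the desired conclusion to its extension $\pi_{n-3}=\pi_{n-2}=\pi_{n-1}=\pi_n$.

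For the inductive step I would exploit the locality of the construction: $c_{i,i+1}$ depends on only the four consecutive vertices $A_{i-1},\ldots,A_{i+2}$, and $g_i$ on only the five consecutive vertices $A_{i-2},\ldots,A_{i+2}$. Consequently, for any six consecutive vertices $A_j,\ldots,A_{j+5}$, the three middle circles of the associated sub-hexagon coincide with $c_{j+1,j+2}$, $c_{j+2,j+3}$, $c_{j+3,j+4}$, and its two middle local radical axes coincide with $g_{j+2}$ and $g_{j+3}$. The base case then couples the triple of local axes on one side of this sub-hexagon to the triple on the other, through a common pencil point.

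The strategy is to slide this sub-hexagon around the polygon: starting from the block of axes $g_1,\ldots,g_{n-3}$ already known to pass through $P$, successive applications of the $n=6$ theorem to overlapping six-vertex windows should push the concurrency step-by-step across the remaining gap, ultimately forcing $g_{n-2}$, $g_{n-1}$, and $g_n$ to pass through $P$ as well.

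The main difficulty is controlling the \emph{boundary} local axes of each sub-hexagon, namely those that involve non-original circles defined via the auxiliary diagonals $\langle A_{j+5}, A_{j+1}\rangle$ and $\langle A_{j+4}, A_j\rangle$. Showing that these boundary axes themselves pass through $P$ at the correct stage of the propagation is the crux, and likely requires either an auxiliary lemma comparing the boundary circles of consecutive sub-hexagons, or a direct verification of the smallest new case $n=7$ to bootstrap the induction.
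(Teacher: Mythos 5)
Your reduction of the pencil condition to the chain of power equalities $\pi_n=\pi_1=\cdots=\pi_{n-3}$ is correct (and even shows that only two of the three remaining equalities need to be established, the third being automatic since $\pi_{n-3}=\pi_n$). The locality analysis is also right: in the sub-hexagon on $A_j,\dots,A_{j+5}$ exactly the three middle circles and the two middle axes agree with those of the $n$-gon. But this is precisely where the plan breaks down, and you have correctly located the crux without filling it: the $6$-point theorem needs \emph{three consecutive} axes of the hexagon in a pencil before it yields anything, and each window supplies only \emph{two} genuine ones. The third consecutive axis is necessarily a boundary axis, built from the closing circles that use the diagonals $\langle A_{j+4},A_j\rangle$ and $\langle A_{j+5},A_{j+1}\rangle$; there is no reason for it to pass through $P$, and proving that it does is essentially equivalent to the original problem. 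Verifying $n=7$ directly would not repair this, because the same two-out-of-three obstruction recurs at every $n$. So as it stands the induction cannot get started, and the proposal is a plan with its central step missing rather than a proof.

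The paper's induction works differently and is worth contrasting. Instead of sliding hexagonal windows, it \emph{contracts} the polygon: it replaces the adjacent pair $A_3,A_4$ by the single point $A_{3,4}=\langle A_2,A_4\rangle\cap\langle A_4,A_6\rangle$ (i.e.\ $l_2\cap l_5$), producing an $(n-1)$-gon. The new axis through $A_{3,4}$ is shown to lie in the given pencil by applying the \emph{unconditional} $5$-axes theorem to the auxiliary pentagon $A_2A_3A_4A_5A_{3,4}$ --- unconditional concurrence is what makes the new axis come for free, with no boundary axes to control --- and the induction hypothesis is then applied to the contracted configuration. The price is a long list of possible degeneracies of the contracted configuration, which the paper handles by a perturbation lemma (moving $A_2,A_3$ along $l_1,l_4$ while preserving all axes and the center, possibly after a field extension) and by a degenerate version of the $5$-axes theorem for three collinear consecutive points. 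Your proposal addresses none of these genericity issues, but the more fundamental problem is the unproved propagation step above.
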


We prove the theorem under weaker assumptions 
and in a more general setting. 
As shown in \cite{FHS2}, the theorem is a result in affine geometry:
a radical axis $g_i$ can be constructed by drawing parallel lines.

We can relax the condition that no three points lie on a line.  In fact,
the theorem continues  to hold in certain limiting cases, if the elements
of the construction are suitably reinterpreted. We make one case for $n=5$
explicit 
for later use.

\section{Preliminaries}
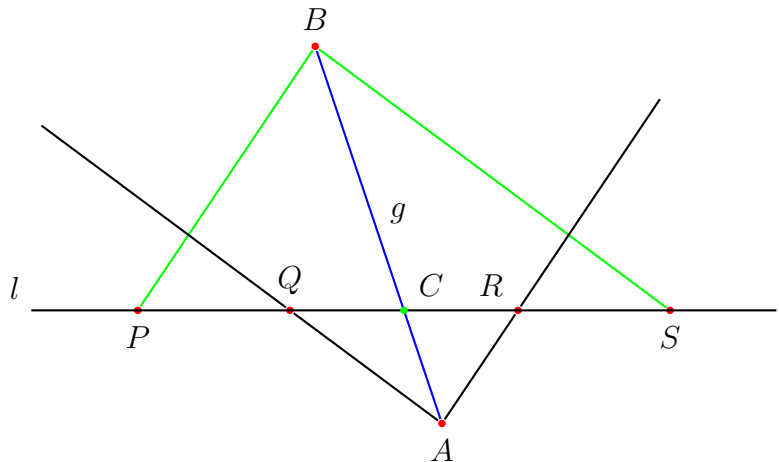
\begin{figure}[b]
\comment
\begin{tikzpicture}
[thick,punt/.style={circle,fill=red,inner sep=1pt},
punk/.style={circle,fill=green,inner sep=1pt}]
\coordinate [label=below:$P$,punt] (P) at (0,0) ;
\coordinate  [label=above:$Q$,punt] (Q) at (2,0) ;
\coordinate [label=above left:$R$,punt] (R) at (5,0) ;
\coordinate  [label=below:$S$,punt] (S) at (7,0) ;
\coordinate [label=below:$A$,punt] (A) at (4,-1.5) ;
\coordinate [label=above:$B$,punt] (B) at (intersection of P--{$(P)+(R)-(A)$}
and S--{$(S)+(Q)-(A)$});

\draw[green] (P) -- (B) -- (S); 
\draw ($(Q)+0.7*($(B)-(S)$)$)--(Q) -- (A) -- (R)--($(R)+0.8*($(B)-(P)$)$) 
($(P)!1.2!(S)$) -- ($(S)!1.2!(P)$) node[black,above left]{$l$};
\draw[blue]  (B) --  node[black,above right] {$g$} (A);

\node [label=above right:$C$,punk] 
    at (intersection of A--B and Q--R)  {};
\end{tikzpicture}
\endcomment
\caption{Construction of the axis}\label{axisfig}
\end{figure}

We work in the affine plane $\A^2(k)$ over an arbitrary field $k$, which we 
view as embedded in $\P^2(k)$. All lines considered are projective lines.
Two lines (different from the line at infinity) are
parallel if their intersection point is a point at infinity.
A general reference for this section is the book \cite{Ev}.
\begin{defn}\label{axisdef}
Let $(P,Q)$ and $(R,S)$ be two 
pairs of finite
points on a line $l$; it is allowed that $P=Q$ or $R=S$, 
but neither $R$ nor $S$
may coincide with $P$ or $Q$. Let $A\notin l$ be a finite point.
Denote by $l_P$ be the line through $P$ that is parallel to the line 
$\langle A,R \rangle$ and take $l_S\parallel \langle A,Q \rangle$ through
$S$. Set $B=l_P\cap l_S$. The line $g=\langle A,B\rangle$ is the \textit{axis}
of the configuration, see figures \ref{axisfig} and \ref{infaxisfig}.  
\end{defn}

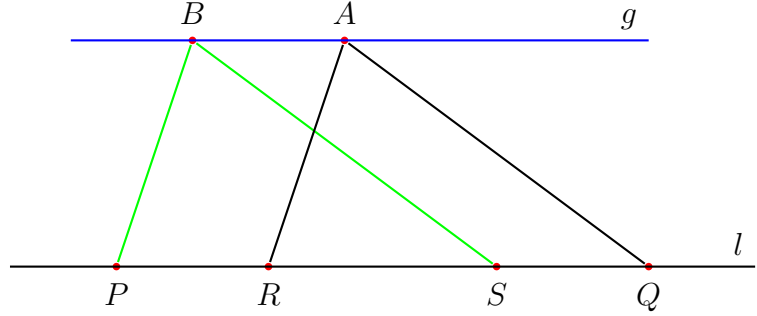
\begin{figure}
\comment
\begin{tikzpicture}
[thick,punt/.style={circle,fill=red,inner sep=1pt},
punk/.style={circle,fill=green,inner sep=1pt}]
\coordinate [label=above:$B$,punt] (B) at (1,3) ;
\coordinate [label=below:$P$,punt] (P) at (0,0) ;
\coordinate [label=below:$R$,punt] (R) at (2,0) ;
\coordinate [label=below:$S$,punt] (S) at (5,0) ;
\coordinate [label=below:$Q$,punt] (Q) at (7,0) ;
\coordinate  [label=above:$A$,punt] (A) at (3,3) ;
\draw [green] (P) -- (B) -- (S); 
\draw (Q) -- (A) -- (R)
($(Q)!1.2!(P)$) -- ($(P)!1.2!(Q)$) node[black,above left]{$l$}; 
\draw[blue]   ($(A)!1.8!(B)$) -- ($(B)!3!(A)$) node[black,above left] {$g$} ;
\end{tikzpicture}
\endcomment
\caption{Axis parallel to the line}\label{infaxisfig}
\end{figure}

The difference $P-Q$ of two  points in the affine plane 
is a well defined vector in the
associated vector space.
For points $P, Q, R, S$ on a line with $R\neq S$, 
the vector $P-Q$ is a scalar multiple of the vector $R-S$, so
the ratio
$\frac{P-Q}{R-S}$ 
is an element of the ground field $k$. We use the convention that 
$\frac{P-Q}{P-R}=1$ if $P$ lies at infinity and $Q$ and $R$ are distinct
finite points.

\begin{lemma}\label{lem_C}
\label{intsecptlemma}
The intersection point $C=g\cap l $ is determined by the 
equivalent conditions
\[
\frac {C-Q}{C-R}=\frac{Q-S}{R-P}\;,
\]
which in case $P\neq Q$ is equivalent to
\[
\frac{C-Q}{C-P}=\frac{R-Q}{R-P}\frac{S-Q}{S-P}\;
\]
and to 
\[
\frac{C-S}{C-R}=\frac{Q-S}{Q-R}\frac{P-S}{P-R}\;
\]
in case $R\neq S$.
\end{lemma}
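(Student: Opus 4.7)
The plan is to pick affine coordinates in which the construction becomes as light as possible, verify the first identity by direct substitution, and then deduce the other two from essentially the same calculation. I would take $A$ as the origin of an affine frame and write $l$ as $M + x\vec{v}$ for some fixed point $M\in l$ and direction vector $\vec{v}$; the hypothesis $A\notin l$ makes $M$ (as a position vector from $A=0$) and $\vec{v}$ linearly independent. Letting $p,q,r,s,c\in k$ be the scalar parameters of $P,Q,R,S,C$ along $l$ reduces the entire problem to a computation in a single variable.

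Since $A=0$, the direction $R-A$ is just $R$ and the direction $Q-A$ is just $Q$, so $B$ is the intersection of $l_P=\{P+tR : t\in k\}$ with $l_S=\{S+uQ : u\in k\}$. Expanding $P+tR=S+uQ$ in the basis $\{M,\vec{v}\}$ and matching coefficients forces $t=u=(s-p)/(r-q)$. Since $g=\langle A,B\rangle$ is the set of scalar multiples of $B$, the requirement that $\lambda B$ lie on $l$ forces $\lambda(1+t)=1$, whence $c=(p+tr)/(1+t)$. A one-line simplification yields $c-q=(s-q)/(1+t)$ and $c-r=(p-r)/(1+t)$, and dividing gives the first identity. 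Two analogous simplifications $c-p=t(r-p)/(1+t)$ and $c-s=t(q-s)/(1+t)$ (the latter using $s-p=t(r-q)$) deliver the remaining two identities.

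For the equivalence of the three forms I would argue that each is a linear equation in $c$ with a unique solution under its stated non-degeneracy hypothesis, and since all three are satisfied at the constructed $c$ they characterize the same point. The only real obstacle is bookkeeping: one must handle correctly the limiting case $t=-1$, equivalently $p+q=r+s$, in which $g$ is parallel to $l$ and $C$ lies at the point at infinity of $l$, and check that the paper's convention $(P-Q)/(P-R)=1$ when $P$ is at infinity makes each of the three identities continue to hold.
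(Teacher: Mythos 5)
Your argument is correct, and it amounts to actually carrying out the ``direct computation'' that the paper only asserts is possible: with $A$ as origin and $P,Q,R,S$ written as $M+p\vec v,\dots$ in the independent pair $\{M,\vec v\}$ (independence being exactly $A\notin l$), one gets $t=(s-p)/(r-q)$ (note you are implicitly using $R\neq Q$, which Definition \ref{axisdef} guarantees), $c=(p+tr)/(1+t)$, and the three identities fall out; your flagged case $1+t=0$, i.e.\ $p+q=r+s$, is precisely when $g\parallel l$, and the convention $\frac{C-Q}{C-R}=1$ is consistent there since $p+q=r+s$ gives $q-s=r-p$. The paper instead proves the lemma (in Remark \ref{euclrem}, phrased for the Euclidean plane) by observing that the dilation with center $C$ carrying $B$ to $A$ also carries $P$ to $R$ and $S$ to $Q$, so that $\frac{C-P}{C-R}=\frac{C-B}{C-A}=\frac{C-S}{C-Q}$, and subtracting $1$ from the outer members yields the first identity at once; the infinite case is handled by $R-P=B-A=Q-S$. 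Your route buys uniform validity over an arbitrary field $k$ without appealing to similarity, and it verifies the second and third displayed formulas directly rather than leaving them as algebraic consequences of the first (for which your observation that each is a linear condition in $c$, degenerate exactly when $p+q=r+s$, is the right justification of ``equivalent''); the paper's route is shorter and more geometric but, as written, covers only the first formula and the Euclidean setting. The only gaps are cosmetic: you should say explicitly that $R\neq Q$ licenses the division by $r-q$, and actually substitute $q-s=r-p$ into the three right-hand sides in the case $C$ at infinity, both of which are one-line checks.
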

\begin{notation}
We denote the point so determined by $C=[P,Q\mid R,S]$.
\end{notation}

The lemma can be proved by direct computation.
It also follows (if the four points $P$, $Q$, $R$ and $S$ are all 
distinct) from \cite[Lemma 1]{FHS2} and its corollary, which moreover 
establish that the above affine definition of the axis gives the radical axis
of circles as in figure \ref{axispluscirclefig}, 
in the context of general affine metric planes.

\begin{remark}\label{euclrem}
For the 
euclidean plane these properties can easily be
established with geometric arguments.
To prove the lemma we use similarity of triangles 
in figure \ref{axisfig}, in case $C$ is a finite point. 
We have that $\triangle BCP \sim \triangle ACR $ and 
$\triangle BCS \sim \triangle ACQ $. Therefore
\[
\frac{C-P}{C-R}=\frac{C-B}{C-A}=\frac{C-S}{C-Q}\;.
\]
It follows that 
\[\frac{R-P}{C-R}=\frac{C-P}{C-R}-1=\frac{C-S}{C-Q}-1=\frac{Q-S}{C-Q}\;.
\]
In the case that
$C$ lies at infinity (figure \ref{infaxisfig}) we have
$R-P=B-A=Q-S$.

To find the axis as radical axis we add circles to the figure
(see figure \ref{axispluscirclefig}).  
Let $c_1$ be the circle
through $A$, $P$, $Q$ and $c_2$ the circle through $A$, $R$, $S$.
If $P=Q$, then $c_1$ is the circle through $A$ which is tangent to
the line $l$ in the point $P=Q$; if $R=S$, the circle $c_2$ is 
tangent to $l$. 
Consider also the circle $c_3$ through $A$, $Q$ and $R$.
Then $c_1$ and $c_3$ intersect in $A$ and $Q$, so the line 
$\langle A,Q \rangle$ is the radical axis of $c_1$ and $c_3$.
The parallel line $l_S$ is the locus of points for which
the power w.r.t.~$c_1$ has constant difference with the power w.r.t.~%
$c_3$, the difference being $(S-P)\cdot(S-Q)
-(S-Q)\cdot(S-R)=(S-Q)\cdot(R-P) $. 
The line $l_P$ is 
the locus where the power w.r.t.~$c_2$ differs from the power
w.r.t.~$c_3$ by the same quantity, as $(P-S)\cdot(P-R)
-(P-R)\cdot(P-Q)=(P-R)\cdot(Q-S)$.
Therefore the intersection point $B=l_S\cap l_P$ lies on the radical
axis of $c_1$ and $c_2$, so this radical axis is the axis  
$g=\langle A,B \rangle$.

\begin{figure}
\comment
\begin{tikzpicture}
[thick,punt/.style={circle,fill=red,inner sep=1pt},
punk/.style={circle,fill=green,inner sep=1pt}]
\clip (-2.5,-4) rectangle (8.5,4);
\coordinate [label=above left:$P$,punt] (P) at (0,0) ;
\coordinate  [label=above:$Q\;\;$,punt]  (Q) at (2,0) ;
\coordinate  [label=above left:$R$,punt] (R) at (5,0) ;
\coordinate  [label=above right:$S$,punt] (S) at (7,0) ;
\coordinate [label=below:$A\;\;$,punt]  (A) at (4,-1.5) ;
\coordinate [label=above:$B$,punt] (B) at (intersection of P--{$(P)+(R)-(A)$}
and S--{$(S)+(Q)-(A)$});

\draw [brown] (P) -- (B) -- (S) ;
\draw ($(Q)+0.7*($(B)-(S)$)$)--(Q) -- (A) -- (R)--($(R)+0.8*($(B)-(P)$)$) 
($(P)!1.2!(S)$) -- ($(S)!1.2!(P)$);
\draw[blue] ($(B)!1.5!(A)$)--(B);

\circleThroughThreePointsWLabel{A}{P}{Q}{c1}{130}{c_1};
\circleThroughThreePointsWLabel{A}{R}{S}{c2}{0}{c_2};
\circleThroughThreePointsWLabel{A}{Q}{R}{c3}{130}{c_3};
\node [label=above:$\;\;C$,punk] 
    at (intersection of A--B and Q--R)  {};

\end{tikzpicture}
\endcomment
\caption{}\label{axispluscirclefig}
\end{figure}

In the situation of figure \ref{infaxisfig} the center of the circle
$c_1$ lies on the perpendicular bisector of $PQ$, which is also the
perpendicular bisector of $RS$, on which the center of $c_2$ lies. 
Therefore the
radical axis is parallel to $l$ and $B$ lies on it.
\end{remark}

\begin{lemma}\label{invo}
Given $C$ and $(R,S)$ on $l$, the map $\gamma\colon l\to l$, sending
$X\in l$ to the point $\gamma(X)$, determined by $C=[X,\gamma(X)\mid R,S]$
is an involutive projectivity.
\end{lemma}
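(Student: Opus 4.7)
The plan is to choose a coordinate on the line $l$ and reduce the defining relation of $\gamma$ to a manifestly symmetric equation. Write $x$, $y$, $c$, $r$, $s$ for the coordinates of $X$, $\gamma(X)$, $C$, $R$, $S$ respectively. Setting $(P,Q)=(X,\gamma(X))$ in the first of the three equivalent identities of Lemma \ref{lem_C} gives
\[
\frac{c-y}{c-r}=\frac{y-s}{r-x},
\]
that is, $(c-y)(r-x)=(c-r)(y-s)$.

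Next I will expand both sides and cancel the mixed linear terms; a short routine manipulation collapses the equation to the symmetric form
\[
xy-c(x+y)\;=\;rs-c(r+s).
\]
From this identity both conclusions drop out at once. The equation is unchanged under the swap $x\leftrightarrow y$, so $\gamma(\gamma(X))=X$, proving that $\gamma$ is involutive. Solving for $y$ yields the M\"obius expression
\[
y\;=\;\frac{cx+rs-c(r+s)}{x-c},
\]
whose associated $2\times 2$ determinant equals $-(c-r)(c-s)$. The non-coincidence hypotheses built into Definition \ref{axisdef} guarantee $C\ne R$ and $C\ne S$, so this determinant is nonzero and $\gamma$ is a genuine projectivity of $l$ to itself.

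The one point requiring care is the projective completion: when $X=C$ the M\"obius formula sends $X$ to the point at infinity on $l$, and conversely. The convention $\tfrac{P-Q}{P-R}=1$ for $P$ at infinity, adopted just before Lemma \ref{lem_C}, was introduced precisely so that the defining relation continues to make sense in these limiting cases; one checks easily that the symmetric form above is compatible with this convention, so that the M\"obius formula agrees with the intended extension of $\gamma$ to all of $l$. As a pleasant by-product the symmetry of the reduced equation also shows that $(R,S)$ is itself a pair of $\gamma$, reflecting the symmetric role played by the two pairs $(P,Q)$ and $(R,S)$ in Definition \ref{axisdef}.
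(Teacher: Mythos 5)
Your proof is correct, but it takes a genuinely different route from the paper's. The paper argues synthetically: it exhibits $\gamma$ as a composite of three perspectivities --- project $l$ onto the axis $g$ from the point at infinity of $\langle A,R\rangle$, then project $g$ onto the line at infinity from $S$, then project back onto $l$ from $A$ --- so that $\gamma$ is a projectivity by construction; involutivity is then read off from the formulas of Lemma \ref{lem_C}, or deduced from the classical fact that a projectivity of a line interchanging a pair of points (here $R$ with $S$, and $C$ with the point at infinity of $l$) is an involution. You instead choose a coordinate on $l$ and collapse the defining relation to the symmetric equation $xy-c(x+y)=rs-c(r+s)$, the classical normal form of an involution, from which both involutivity and projectivity are immediate; your computation also makes nondegeneracy quantitative via the determinant $-(c-r)(c-s)$, and the inequalities $C\neq R$, $C\neq S$ needed there do indeed follow from Lemma \ref{lem_C} together with the hypotheses of Definition \ref{axisdef}. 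The one case your coordinate treatment passes over is $C$ itself at infinity (the situation of Figure \ref{infaxisfig}, which the paper does need): there the symmetric equation degenerates to $x+y=r+s$, still a nondegenerate involution, so the conclusion stands, but it merits a sentence. On balance, the paper's argument keeps the projectivity geometrically visible and tied to the axis construction, while yours is self-contained, valid verbatim over an arbitrary field, and yields an explicit formula (and, as a bonus, the fact that $(R,S)$ is a pair of $\gamma$, which is exactly the ingredient the paper uses in the opposite direction).
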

\begin{proof}
To find $\gamma(X)$ we choose a point $A\notin l$ and
draw the line $l_X$ through
$X$, parallel to $\langle R, A\rangle$ (see figures \ref{axisfig}
and \ref{infaxisfig},
reading $X$ and $\gamma(X)$ for $P$ and $Q$). 
It intersects the line $g$ in a point
$Y$. Through $Y$ we draw the line $l_S=\langle Y,S\rangle$.
Then we draw  a line $m$ through $A$ parallel to $l_S$ and define 
$\gamma(X) = l\cap m$. This construction can be described
as first projecting the line $l$ from the point at infinity on the line 
$\lijn(A,R)$  onto the line $g$, 
then projecting $G$ from $S$ onto the line $l_\infty$ at infinity
and finally
projecting $l_\infty$ onto $l$ from 
$A$. This shows that the map $\gamma$ is a projectivity.

That $\gamma^2=\text{id}$ can be seen from the formulas in lemma 
\ref{intsecptlemma}
or by observing that 
$\gamma$ interchanges $R$ with $S$, and $C$  
with the point at infinity on the line $l$.
\end{proof}

\begin{figure}
\comment
\begin{tikzpicture}
[thick,punt/.style={circle,fill=red,inner sep=1pt},
punk/.style={circle,fill=green,inner sep=1pt}]
\clip (-3,-3) rectangle (8,4);
\coordinate [label=above:$B$,punt] (B) at (1,3) ;
\coordinate [label=above left:$P$,punt] (P) at (0,0) ;
\coordinate  [label=below right:{$\quad A$},punt] (R) at (3,0) ;
\coordinate [label=above right:$S$,punt] (S) at (6,0) ;
\coordinate (RP) at ($(R)+0.8*($(B)-(P)$)$) ;
\coordinate (RS) at ($(R)+0.7*($(B)-(S)$)$) ;

\draw [brown] (P) -- (B) -- (S) ;
\draw
(RP) node[above]{$l_R$} -- (R) -- (RS) node[above]{$l_Q$}
($(P)!1.2!(S)$) -- ($(S)!1.2!(P)$);

\circleThroughPointTangentWLabel{P}{R}{RS}{c1}{150}{c_1}
\circleThroughPointTangentWLabel{S}{R}{RP}{c2}{-40}{c_2}

\draw[blue]   (B) --(R)-- ($(B)!2!(R)$) node[black,above] {$g$} ;

\end{tikzpicture}
\endcomment
\caption{}\label{degencirc}
\end{figure}

\begin{remark} Given the involution $\gamma\colon l \to l$ the point $C$
is determined as the image of the point  at infinity on the line $l$.
\end{remark}
 
\begin{remark}\label{axis_for-A-in-l}
The point $C$ on $l$ is determined by the unordered pairs $(P,Q)$ 
and $(R,S)$, independent of the point $A$ outside the line. 
We have emphasized the construction
using a particular choice of points ($Q$ and $R$) connected to $A$, as
the construction with the points $A_1$, \dots, $A_n$ naturally leads
to this situation: the line $l=l_i$ is determined by the points
points $P=A_{i-1}$ and $S=A_{i+1}$, while $Q=B_{i-1,i}$ and $R=B_{i,i+1}$
arise as intersection points of $l$ with the lines $l_{i-1}=
\lijn(A_{i-2},A_i)$ and $l_{i+1}=\lijn(A_i,A_{i+2})$.
This extra structure makes it possible to define the axis 
if $A_i\in l_i$; in such a case there would be no
involution on the line $l_i$.

Let $A$ be a point on the line $l=\lijn(P,S)$, different from $P$ and $S$
and let $l_Q$ and $l_R$ be two lines through $A$. Denote by $B$
the intersection point  of the line $l_P$ through $P$, parallel to $l_R$
and $l_S$ through $S$, parallel to $l_Q$. We define the axis of this 
configuration as  the line $\lijn(A,B)$.
In the case of the Euclidean plane it is the radical axis of the circle
though $P$, tangent to $l_Q$ in $A$, and the circle through $S$, tangent 
to $l_R$ in $A$. The proof of   Remark \ref{euclrem} extends to this situation,
with the  circle $c_3$ reduced  to the point
$A=Q=R$ (compare figure \ref{degencirc} with figure \ref{axispluscirclefig}).
\end{remark}

\section{An $n$-axes theorem}
We now formulate our main theorem.
\begin{theorem}\label{mainthm}
Let $A_1$, \dots, $A_n$ be a sequence of $n\geq5$ distinct points in $\A^2(k)$, 
and define
$l_{i}=\langle A_{i-1},A_{i+1}\rangle$ \textup(indices considered modulo $n$%
\textup).   
Assume that
\newcounter{enumval}
\begin{enumerate}[\rm(i)]
\item \label{ass1} $A_i\notin l_{i-2}, l_i, l_{i+2}$,
\item \label{ass2} $l_{i-1}\neq l_{i+1}$,
\item \label{ass3} $l_i \nparallel l_{i+1}$,
\setcounter{enumval}{\value{enumi}}
\end{enumerate}
and set $B_{i,i+1}=l_i \cap l_{i+1}$,
$C_i=[A_{i-1},B_{i-1,i}|B_{i,i+1},A_{i+1}]$, and, finally, let
$g_i=\lijn(A_i,C_i)$ be the axis through $A_i$. 
If the $n-3$ axes $g_1$, $g_2$, \dots, $g_{n-3}$ lie in a pencil, 
then the remaining three  axes $g_{n-2}$, $g_{n-1}$, $g_n$
 lie in the same pencil.
\end{theorem}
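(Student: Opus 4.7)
The plan is to prove the theorem by induction on $n$. The base case $n=5$ is Hoehn's 5-axes theorem stated in the introduction (from \cite{FHS1}): when $n=5$ the hypothesis on $n-3=2$ axes is vacuous, and the conclusion that all five axes lie in a pencil is Hoehn's theorem itself.

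For the inductive step with $n\geq 6$, assume the theorem for all $5\leq m<n$, and consider an $n$-gon with $g_1,\ldots,g_{n-3}$ in a common pencil $P$ through a point $M$ (possibly at infinity). It suffices to establish the one-axis version of the conclusion, namely $g_{n-2}\in P$. Indeed, once this is proven, applying the same statement to the cyclically relabeled $n$-gon in which $A_i$ is replaced by $A_{i+1}$ (so that the new axis $g_j$ equals the old $g_{j+1}$) yields $g_{n-1}\in P$, because its hypothesis $g_2,\ldots,g_{n-2}\in P$ is by then known; a further cyclic shift similarly gives $g_n\in P$.

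To prove $g_{n-2}\in P$, form the $(n-1)$-gon $(A_2,A_3,\ldots,A_n)$ obtained by removing $A_1$, with renumbering $A'_j:=A_{j+1}$. A direct computation shows that its lines satisfy $l'_j=l_{j+1}$ of the original $n$-gon for $j\in\{2,\ldots,n-2\}$, with two new ``boundary'' lines $l'_1=\langle A_n,A_3\rangle$ and $l'_{n-1}=\langle A_{n-1},A_2\rangle$; consequently its axes satisfy $g'_j=g_{j+1}$ for $j\in\{3,\ldots,n-3\}$. In particular $g'_{n-3}=g_{n-2}$. The preserved axes $g'_3,\ldots,g'_{n-4}$ coincide with $g_4,\ldots,g_{n-3}$ and thus lie in $P$ by hypothesis. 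Provided the two remaining ``boundary'' axes $g'_1$ and $g'_2$ of the $(n-1)$-gon also lie in $P$, the inductive hypothesis applied to this $(n-1)$-gon then yields $g'_{n-3}=g_{n-2}\in P$, as desired.

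The main obstacle is therefore to verify $g'_1,g'_2\in P$. Each is the middle axis of a five-consecutive-point subpentagon of the $(n-1)$-gon --- namely $(A_{n-1},A_n,A_2,A_3,A_4)$ for $g'_1$ and $(A_n,A_2,A_3,A_4,A_5)$ for $g'_2$ --- so Hoehn's theorem places each in a pencil with four other pentagon axes. To identify these Hoehn pencils with $P$, I would use the involutive projectivity of Lemma \ref{invo} on the lines $l_3$ and $l_4$, which are shared with the original $n$-gon and on which the points $C_3, C_4$ are controlled by the pencil condition $g_3,g_4\in P$ via the cross-ratio identities of Lemma \ref{intsecptlemma}. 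Matching these identities with the analogous ones arising from the subpentagons should force the Hoehn pencils to share centre $M$ with $P$, completing the induction.
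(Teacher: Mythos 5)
There is a fatal gap in the core reduction: the two boundary axes $g'_1$ and $g'_2$ of your vertex-deleted $(n-1)$-gon do not lie in the pencil $P$, so the inductive hypothesis can never be applied. Concretely, $g'_2$ is the axis through $A'_2=A_3$ on the line $l'_2=l_3=\langle A_2,A_4\rangle$, determined by the four points $A_2$, $B'_{1,2}=\langle A_n,A_3\rangle\cap l_3$, $B_{3,4}$, $A_4$, whereas $g_3$ is determined by $A_2$, $B_{2,3}=\langle A_1,A_3\rangle\cap l_3$, $B_{3,4}$, $A_4$. Since $A_n\notin l_2$ by assumption (\ref{ass1}), the lines $\langle A_n,A_3\rangle$ and $\langle A_1,A_3\rangle$ are distinct and meet $l_3$ in distinct points, so $B'_{1,2}\neq B_{2,3}$; and for fixed $A_2$, $B_{3,4}$, $A_4$ the second entry $Q$ is recovered from $C=[A_2,Q\mid B_{3,4},A_4]$ as the image of $A_2$ under the involution of Lemma \ref{invo}, so $C'_2\neq C_3$ and hence $g'_2\neq g_3$. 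But $g'_2$ and $g_3$ are both lines through $A_3$, and only one line through $A_3$ passes through the center $M$; therefore $g'_2\notin P$ (outside the degenerate case $M=A_3$). The same argument disposes of $g'_1$. Your fallback --- pinning the Hoehn pencils of the subpentagons $(A_{n-1},A_n,A_2,A_3,A_4)$ and $(A_n,A_2,A_3,A_4,A_5)$ to $M$ via Lemmas \ref{intsecptlemma} and \ref{invo} --- cannot succeed either: none of the five axes of either subpentagon coincides with an axis of the original configuration (their defining lines $\langle A_n,A_3\rangle$, $\langle A_4,A_n\rangle$, $\langle A_5,A_2\rangle$ all differ from the corresponding $l_i$), so nothing identifies their Hoehn centers with $M$.

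The paper's induction avoids exactly this trap by merging rather than deleting: it replaces $A_3$ and $A_4$ by the single point $A_{3,4}=l_2\cap l_5$. The crucial feature is that the lines $l_2=\langle A_1,A_3\rangle$ and $l_5=\langle A_4,A_6\rangle$ survive unchanged in the new $(n-1)$-gon, and the auxiliary pentagon $A_2A_3A_4A_5A_{3,4}$ has axes through $A_3$ and $A_4$ that are \emph{exactly} $g_3$ and $g_4$; Hoehn's theorem then forces its remaining axes --- in particular the new axis at $A_{3,4}$ and those at $A_2$ and $A_5$ --- into the pencil with center $g_3\cap g_4=M$. That exact coincidence of two axes is precisely what your subpentagons lack. (A smaller issue: your step from $n=6$ to $n=5$ preserves no axes at all, so $n=6$ must be a separate base case, which the paper supplies as Theorem \ref{6axis}; and the paper must additionally handle the degenerate positions of $A_{3,4}$ via Lemma \ref{move} and Theorem \ref{degen}.) Any deletion-style repair would have to replace the removed vertex by a point keeping both adjacent lines fixed --- which is essentially the $A_{3,4}$ construction.
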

As $A_i\in l_{i+1}=\lijn(A_i,A_{i+2})$ but $A_i\notin l_i$ by assumption
\eqref{ass1}, we have that $l_i\neq l_{i+1}$ and therefore 
assumption \eqref{ass3} guarantees the existence
of the point $B_{i,i+1}$ as a well-defined finite point.

By \eqref{ass1} and \eqref{ass2} the points $A_{i-1}$, $B_{i-1,i}$,
$B_{i,i+1}$ and $A_{i+1}$ are four distinct points on the line $l_i$
and $A_i$ is a point outside, so that the axis $g_i$ is
defined.
The condition  $l_{i-1}\neq l_{i+1}$ means that 
$A_{i-2}$, $A_i$ and $A_{i+2}$ are not collinear.
It is therefore equivalent to each of the conditions
$A_{i-2}\notin l_{i+1}$ and $A_{i+2}\notin l_{i-1}$.
Therefore the  assumptions 
\eqref{ass1} and \eqref{ass2}
can be replaced by
\begin{enumerate}[\rm(i)]
\setcounter{enumi}{\theenumval}
\item \label{ass4} $A_i\notin  l_{i-3}, l_{i-2}, l_i, l_{i+2}, l_{i+3}$.
\end{enumerate}
In particular this means that for $n\leq 6$, 
\eqref{ass1} and \eqref{ass2} together are equivalent to the
condition that no three points are collinear. Therefore the Theorem holds
for $n=5 $ and $n=6$ by the results of \cite{FHS2}.
\begin{defn}
We call the common (finite or infinite) point of the pencil 
$\{g_i\}$
the \textit{center} of the sequence $A_1$, \dots, $A_n$.
\end{defn}

\section{A degenerate case of the 5-axes theorem}\label{degenerate}
The $5$-axes theorem states that for five points $A_1$,\dots, $A_5$
in the plane, no three collinear, and $\langle A_{i-1}, A_{i+1}\rangle
\nparallel \langle A_{i}, A_{i+2}\rangle $, the five axes $g_1$, \dots,
$g_5$ lie in a pencil.
Motivated partly because they will be required later, but also because they are 
themselves of some interest,
we study in this section some special and limiting cases. We first consider
when the center is a point at infinity.
More generally, we
investigate the relationship of the center to the 
position of the initial five points.

\begin{prop}\label{centerprop}
Consider four points $A_1$, $A_2$, $A_3$ and $A_4$
in an affine plane $\A^2(k)$, such that no three are collinear and such that
$l_2=\langle A_1, A_3 \rangle$ is not parallel to  
$l_3 = \langle A_2, A_4 \rangle $.
A point $A_5$ in the plane, such that the assumptions of the 5-axes theorem are 
satisfied \textup(i.e., $A_5$ does not lie on a line $\langle A_i,A_j\rangle$, 
while $l_i\nparallel l_{i+1}$ for all $i\neq 2$\textup)
determines a center $M$ in the extended plane $\P^2(k)$. The correspondence
$A_5 \mapsto M$ is the restriction of a projective transformation
$\P^2(k) \to \P^2(k)$. In particular, 
the locus of points $A_5$ for which $M$ is a point at infinity 
\textup(i.e., for which the axes are parallel\textup) is a line.
\end{prop}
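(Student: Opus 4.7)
The plan is to exploit the projective nature of each step in the construction and to identify why the \emph{a priori} quadratic dependence of $M$ on $A_5$ collapses to a linear one.

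First, I would fix a projective frame with $A_1 = [1:0:0]$, $A_2 = [0:1:0]$, $A_3 = [0:0:1]$, $A_4 = [1:1:1]$, and write $A_5 = [x:y:z]$. By the $5$-axes theorem, the center is recovered as $M = g_2 \cap g_3$, so only the two axes through the fixed points $A_2$ and $A_3$ need to be analyzed. I would then show that $A_5 \mapsto g_2$ factors through a projectivity of the pencil of lines through $A_2$: among the four data $A_1, B_{1,2}, B_{2,3}, A_3$ defining $C_2$ in Lemma~\ref{intsecptlemma}, only $B_{1,2} = l_2 \cap \langle A_2, A_5\rangle$ depends on $A_5$, and this dependence is the linear projection from $A_2$ onto $l_2$. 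Post-composing with the Moebius transformation $B_{1,2}\mapsto C_2$ from Lemma~\ref{intsecptlemma} and then with $C_2\mapsto\langle A_2,C_2\rangle$ yields a projectivity $\psi_2$ of the pencil through $A_2$ with $g_2 = \psi_2(\langle A_2, A_5\rangle)$. An analogous analysis gives $g_3 = \psi_3(\langle A_3, A_5\rangle)$.

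The key structural point is that $\psi_2$ and $\psi_3$ each fix the line $\langle A_2, A_3\rangle$. Indeed, as $A_5$ approaches $\langle A_2, A_3\rangle$ one has $B_{1,2} \to l_2 \cap \langle A_2, A_3\rangle = A_3$; the second formula of Lemma~\ref{intsecptlemma}, in the coalescent case $Q = B_{1,2} \to S = A_3$, then forces $C_2 \to A_3$ and hence $g_2 \to \langle A_2, A_3\rangle$; the analogous argument (with $R = B_{3,4} \to P = A_2$) works for $g_3$. In our frame $\langle A_2, A_3\rangle$ has equation $X_1 = 0$, so this fixed-line property forces the equations of $g_2$ and $g_3$ into the shape
\[
(\alpha z + \beta x)X_1 - \delta\, x\, X_3 = 0, \qquad (\alpha' y + \beta' x)X_1 - \delta'\, x\, X_2 = 0,
\]
with the coefficient of the ``third'' variable in each equation divisible by $x$. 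Solving this system simultaneously,
\[
M = [\,\delta\delta' x^2 : \delta\, x(\alpha' y + \beta' x) : \delta'\, x(\alpha z + \beta x)\,],
\]
and cancelling the common factor $x$ yields coordinates for $M$ that are linear in $(x, y, z)$. The resulting $3 \times 3$ matrix defines the projective transformation of $\P^2(k)$ extending $A_5 \mapsto M$.

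The last assertion of the proposition then follows immediately: the preimage of the line at infinity under a projective transformation is a line. The delicate step is the limit argument showing that $\psi_2$ and $\psi_3$ fix $\langle A_2, A_3\rangle$; once this is in hand, the cancellation of the factor $x$ is mechanical.
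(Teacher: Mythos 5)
Your proof is correct, but it takes a genuinely different route from the paper's. The paper argues by brute force: in the coordinates $A_1=(0:0:1)$, $A_3=(0:1:1)$, $A_4=(1:0:1)$, $A_2=(a:b:c)$, $A_5=(x:y:z)$ it computes the auxiliary points $E_1$ and $E_4$ explicitly, intersects $g_1=\langle A_1,E_1\rangle$ with $g_4=\langle A_4,E_4\rangle$, and simply reads off that $M=(cx:bz:(c-b)x+(a-c)y+(c-a+b)z)$ is linear in $(x:y:z)$. You instead explain \emph{why} the a priori quadratic expression for $g_2\cap g_3$ must drop degree: each axis depends on $A_5$ only through one element of a pencil (since in the data for $C_2$ only $B_{1,2}=l_2\cap\langle A_2,A_5\rangle$ moves, and likewise only $B_{3,4}$ for $C_3$), the induced maps $\psi_2,\psi_3$ are projectivities of those pencils, and both fix the common element $\langle A_2,A_3\rangle$ --- which is exactly what forces the common factor $x$. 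This is more conceptual and adapts to any pair of axes without recomputation. Three small points you should make explicit. First, the construction of $C_i$ is affine, not projective (it depends on the line at infinity), so normalizing $A_1,\dots,A_4$ to the standard projective frame does not normalize the configuration; your argument survives only because the constants $\alpha,\beta,\delta,\dots$ are allowed to depend on the (now arbitrary) position of $l_\infty$, and this deserves a sentence. Second, over a general field $k$ the limit language ``$Q\to S$'' should be replaced by evaluating the fractional linear map $q\mapsto(sr-pq)/(r+s-p-q)$ of Lemma \ref{intsecptlemma} at $q=s$ (giving $c=s$, valid since $r\neq p$, i.e.\ $B_{2,3}\neq A_1$); the projectivity $\psi_2$ is then fixed-point-checked algebraically rather than by a limit. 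Third, one should note that $\alpha\delta\neq0$ and $\alpha'\delta'\neq0$ (these are the determinants of the invertible $2\times2$ matrices of $\psi_2$ and $\psi_3$), so that your triangular $3\times3$ matrix is invertible and the map is a genuine projective transformation rather than a degenerate linear one. Finally, be aware that the paper's explicit formula for $M$ is reused immediately afterwards to classify exactly when the center is undefined; your argument, while cleaner, does not produce that formula.
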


\begin{proof}
This is a computation. We construct the axis $g_i$ from the intersection
point $E_i$ of the line through $A_{i-1}$, parallel to $\lijn(A_{i},B_{i,i+1})=
\lijn(A_i,A_{i+2})$ , with the parallel to $\lijn(A_{i},B_{i,i-1})=
\lijn(A_i,A_{i-2})$ through $A_{i+1}$, see figure \ref{affinefiveaxes}.

\begin{figure}
\comment
\begin{tikzpicture}
[thick,scale=3,punt/.style={circle,fill=red,inner sep=1pt},
punk/.style={circle,fill=green,inner sep=1pt},
punb/.style={circle,fill=blue,inner sep=1pt}]
\coordinate [label=below left:$A_1$,punt] (A) at (0,0) ;
\coordinate [label=right:$A_2$,punt] (B) at (2,1.2) ;
\coordinate [label=left:$A_3$,punt] (C) at (0,1) ;
\coordinate [label=below right:$A_4$,punt] (D) at (1,0) ;
\coordinate [label=above:$A_5$,punt] (E) at (1.2,1.8) ;

\draw (A) -- (C) -- (E) -- (B) -- (D) -- (A);

\coordinate (AE) at ($(A)+2*($(B)-(D)$)$) ;
\coordinate (AB) at ($(A)+2*($(E)-(C)$)$) ;
\coordinate (BA) at ($(B)+2*($(C)-(E)$)$) ;
\coordinate (BC) at ($(B)+2*($(A)-(D)$)$) ;
\coordinate (CB) at ($(C)+2*($(D)-(A)$)$) ;
\coordinate (CD) at ($(C)+2*($(B)-(E)$)$) ;
\coordinate (DC) at ($(D)+2*($(E)-(B)$)$) ;
\coordinate (DE) at ($(D)+2*($(C)-(A)$)$) ;
\coordinate (ED) at ($(E)+2*($(A)-(C)$)$) ;
\coordinate (EA) at ($(E)+2*($(D)-(B)$)$) ;


\coordinate [label=above right:$E_1$,punk] (a) at (intersection of B--BC  and E--ED);
\coordinate [label=left:$E_2$,punk] (b) at (intersection of C--CD  and A--AE);
\coordinate [label=right:$E_3$,punk] (c) at (intersection of D--DE  and B--BA);
\coordinate [label=above left:$E_4$,punk] (d) at (intersection of E--EA  and C--CB);
\coordinate [label=below:$E_5$,punk] (e) at (intersection of A--AB  and D--DC);

\draw[thin,green] (A)--(e)--(D)--(c)--(B)--(a)--(E)--(d)--(C)--(b)--(A);

\draw[blue] (A)--(a) (B)--(b) (C)--(c) (D)--(d) (E)--(e);
\coordinate [label=above:\raisebox{6pt}{$M$},punb] (M) at (intersection of  A--a and B--b);

\end{tikzpicture}
\endcomment
\caption{}\label{affinefiveaxes}
\end{figure}

We use homogeneous coordinates and take
$A_1=(0:0:1)$, $A_3=(0:1:1)$, $A_4=(1:0:1)$, $A_2=(a:b:c)$
and $A_5=(x:y:z)$.

The point $E_1$ is easily seen to be $(cx:bz:cz)$.
We compute $E_4=(bx+(c-a)y+(a-c)z:bz:bz)$ and find $M$ as 
the intersection of the axes $g_1=\langle A_1, E_1\rangle$ and
$g_4=\langle A_4, E_4\rangle$.
The result is
\[
M=(cx:bz:(c-b)x+(a-c)y+(c-a+b)z)\;.
\]
In particular, 
$M$ is at infinity if and only if $ (c-b)x+(a-c)y+(c-a+b)z=0$, 
which is the equation of a line whose slope is 
 $\frac{c-b}{c-a}$.
\end{proof}

\begin{remark}\label{computationproof}
With a little more effort one can compute all points $E_i$ and check
that $M$ lies on all axes $g_i=\lijn(A_i,E_i)$. This gives a computational
proof of the five-axes theorem.
\end{remark}
Our stipulation that the conditions of the five-axes theorem be satisfied 
was sufficient for defining the five axes.  
But the resulting
formula for $M$ makes sense under more general circumstances,
indicating that the theorem also holds  in
degenerate cases with a suitable definition of the axes.
The point $M$ 
 fails to be determined only if $cx=bz=-bx+(a-c)(y-z)=0$. When
$A_2$ and $A_5$ are finite points ($c\neq0$ and $z\neq0$), this happens 
if either $A_2=A_4$ and $A_5\in \langle A_1,A_3\rangle$ or
$A_5=A_3$ and $A_5\in \langle A_1,A_4\rangle$.
If, say, $A_5$ lies at infinity ($z=0$), then $A_5=\langle A_1,A_3\rangle
\cap \langle A_2,A_4\rangle$. 
Note that our coordinates
are based on 
the assumption that $A_1$, $A_3$, $A_4$ form a triangle.
In general we can say that the center is undefined 
when for some $i$, $A_{i-1}$
coincides with $A_{i+1}$ and the remaining three points are collinear,
or when $\langle A_{i-1}, A_{i-3}
\rangle \parallel \langle A_{i+1},A_{i+3} \rangle$ with $A_i$ being their
intersection point at infinity,
or when all five points are collinear. 
Moreover, if $M$ is defined, but coincides with the point $A_i$, then the
axis $g_i$ is not defined.

We focus now on one degenerate case, which we need later, 
in which three consecutive
points are collinear: $A_i\in l_i = \langle A_{i-1},A_{i+1}\rangle$.
We have that $ A_i= l_{i-1}\cap l_{i+1}=
l_{i-1}\cap l_i\cap  l_{i+1}$, so $A_i=B_{i-1,i}=B_{i,i+1}$.
In this case the axis $g_i$ can be defined as in Remark
\ref{axis_for-A-in-l}.


\begin{theorem}\label{degen}
Let five points $A_1$, $A_2$, $A_3$, $A_4$ and $A_5$
in the affine plane be given such that $A_5\in \langle A_1,A_4\rangle$, 
but no other three points are collinear. 
Assume that $l_i = \lijn(A_{i-1},A_{i+1})$ is not parallel to 
$l_{i+1} = \lijn(A_i,A_{i+2})$.  
Then  the five axes $g_1$, $g_2$, $g_3$, $g_4$ and $g_5$ 
lie in a pencil.
\end{theorem}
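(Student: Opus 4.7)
The plan is to deduce Theorem~\ref{degen} from the algebraic proof of the five-axes theorem indicated in Remark~\ref{computationproof}. The key point is that every axis $g_i$ admits a single uniform description, regardless of any degeneracy: one can take $g_i=\lijn(A_i,E_i)$, where $E_i$ is the intersection of the line through $A_{i-1}$ parallel to $\lijn(A_i,A_{i+2})$ with the line through $A_{i+1}$ parallel to $\lijn(A_i,A_{i-2})$. In the generic case this is a rewriting of Definition~\ref{axisdef}, using $\lijn(A_i,B_{i,i+1})=l_{i+1}=\lijn(A_i,A_{i+2})$ and $\lijn(A_i,B_{i-1,i})=l_{i-1}=\lijn(A_i,A_{i-2})$. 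The first step is to verify that this parallelogram recipe also returns the right axis in the degenerate cases: those of Remark~\ref{euclrem} with $P=Q$ or $R=S$, and that of Remark~\ref{axis_for-A-in-l} with $A_i\in l_i$.

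Under the hypothesis $A_5\in\lijn(A_1,A_4)=l_5$ the points $B_{4,5}=l_4\cap l_5$ and $B_{5,1}=l_5\cap l_1$ both collapse to $A_5$. Hence $g_1$ is a $P=Q$ case (with $c_1$ tangent to $l_1$ at $A_5$), $g_4$ is an $R=S$ case, and $g_5$ is the Remark~\ref{axis_for-A-in-l} case with $A=Q=R=A_5\in l_5$, while $g_2$ and $g_3$ remain generic. In each of the five cases a direct unpacking of the definitions shows that the parallelogram recipe reproduces the axis prescribed by the relevant remark; for $g_5$, for instance, one takes $l_Q=l_4$ and $l_R=l_1$ in Remark~\ref{axis_for-A-in-l}, so that $l_P$ through $A_4$ is parallel to $\lijn(A_5,A_2)$ and $l_S$ through $A_1$ is parallel to $\lijn(A_5,A_3)$, which is exactly what the recipe asks for $i=5$.

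With that uniformity in place, one can reuse the coordinates of Proposition~\ref{centerprop} verbatim: $A_1=(0:0:1)$, $A_3=(0:1:1)$, $A_4=(1:0:1)$, $A_2=(a:b:c)$, $A_5=(x:y:z)$. The hypothesis $A_5\in\lijn(A_1,A_4)$ becomes $y=0$, so the formula of that proposition specialises to
\[
M=(cx:bz:(c-b)x+(c-a+b)z).
\]
The content of Remark~\ref{computationproof}---that explicit formulas for $E_1,\dots,E_5$ yield $M\in g_i$ for every $i$---is an identity of polynomials in $a,b,c,x,y,z$, so its restriction to $y=0$ is automatic. Therefore the five axes all pass through $M$, or are parallel in case $M$ happens to be at infinity.

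The main obstacle is really the bookkeeping of the second paragraph: one has to unpack the three different formal definitions of the axis (Definition~\ref{axisdef}, Remark~\ref{euclrem}, Remark~\ref{axis_for-A-in-l}) carefully enough to confirm that they all agree with the single parallelogram recipe. Once that compatibility is accepted, Theorem~\ref{degen} reduces without further computation to a specialisation of the algebraic identity that proves the classical five-axes theorem.
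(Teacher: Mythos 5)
Your argument is essentially correct, but it follows the computational route that the paper only gestures at in a single sentence (``The computation, alluded to in remark \ref{computationproof}, also covers this degenerate case''), whereas the proof the paper actually writes out is geometric and quite different. The paper extends the concurrency criterion of Lemma \ref{conditie} to the coincidence case $A=F$ (Lemma \ref{degencondlemma}), proving it with Ceva's and Menelaus' theorems; it applies this with $(B,C,D,E,A=F)=(A_1,A_2,A_3,A_4,A_5=B_{5,1})$ to get $g_1,g_2,g_3$ in a pencil and similarly $g_2,g_3,g_4$, and then pulls $g_5$ into the same pencil by the power-of-a-point argument for radical axes (collinearity of the circle centers when $M$ is at infinity). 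Your specialisation argument buys uniformity over an arbitrary field without invoking circles; the paper's argument buys a synthetic proof that does not rest on an unexhibited computation. The genuinely useful content of your write-up is the second paragraph: the verification that the single parallelogram recipe for $E_i$ reproduces the axis in each of its three degenerate guises ($P=Q$ for $g_1$, $R=S$ for $g_4$, and the Remark \ref{axis_for-A-in-l} case for $g_5$) is exactly what is needed to turn the paper's one-line claim into an argument.

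Two debts remain in your version, both dischargeable but worth stating. First, the polynomial identity you specialise --- explicit formulas for all five $E_i$ together with $\det(A_i,E_i,M)\equiv 0$ --- is only asserted, never exhibited, in Remark \ref{computationproof}; your proof is complete only modulo that computation. Second, vanishing of a determinant at $y=0$ gives the geometric conclusion only if the specialised coordinate vectors still represent honest, distinct points: you should note that under the hypotheses of Theorem \ref{degen} the vectors for $M$ and the $E_i$ do not become $(0:0:0)$ and that $E_i\neq A_i$ (for instance $bz\neq 0$ because $A_1,A_2,A_4$ are not collinear and $A_5$ is finite, and $cx\neq 0$ because $A_1,A_3,A_5$ are not collinear and $A_2$ is finite), so that each $g_i=\lijn(A_i,E_i)$ is a well-defined line containing $M$. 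With those two points addressed, your reduction of Theorem \ref{degen} to a specialisation of the identity behind the five-axes theorem is a valid alternative to the paper's geometric proof.
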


\begin{figure}
\comment
\begin{tikzpicture}
[thick,scale=3,punt/.style={circle,fill=red,inner sep=1pt},
punk/.style={circle,fill=green,inner sep=1pt},
punb/.style={circle,fill=blue,inner sep=1pt}]
\coordinate [label=below right:$A_4$,punt] (A) at (2,0) ;
\coordinate [label=above left:$A_3$,punt] (B) at (0.4,0.6) ;
\coordinate [label=above:$A_2$,punt] (C) at (1.8,1) ;
\coordinate [label=below left:$A_1$,punt] (D) at (0,0) ;
\coordinate [label=below right:$A_5$,punt] (E) at (0.8,0) ;

\draw (A) -- (C) -- (E) -- (B) -- (D) -- (A);

\coordinate (AE) at ($(A)+2*($(B)-(D)$)$) ;
\coordinate (AB) at ($(A)+2*($(E)-(C)$)$) ;
\coordinate (BA) at ($(B)+2*($(C)-(E)$)$) ;
\coordinate (BC) at ($(B)+2*($(A)-(D)$)$) ;
\coordinate (CB) at ($(C)+2*($(D)-(A)$)$) ;
\coordinate (CD) at ($(C)+2*($(B)-(E)$)$) ;
\coordinate (DC) at ($(D)+2*($(E)-(B)$)$) ;
\coordinate (DE) at ($(D)+2*($(C)-(A)$)$) ;
\coordinate (ED) at ($(E)+2*($(A)-(C)$)$) ;
\coordinate (EA) at ($(E)+2*($(D)-(B)$)$) ;


\coordinate [label=above left:$E_4$,punk] (a) at (intersection of B--BC  and E--ED);
\coordinate [label=right:$E_3$,punk] (b) at (intersection of C--CD  and A--AE);
\coordinate [label=left:$E_2$,punk] (c) at (intersection of D--DE  and B--BA);
\coordinate [label=above:$E_1$,punk] (d) at (intersection of E--EA  and C--CB);
\coordinate [label=below:$E_5$,punk] (e) at (intersection of A--AB  and D--DC);

\draw[thin,green] (A)--(e)--(D)--(c)--(B)--(a)--(E)--(d)--(C)--(b)--(A);

\draw[blue] (A)--(a) (B)--(b) (C)--(c) (D)--(d) ;
\coordinate [label=above:$M$,punb] (M) at (intersection of  A--a and B--b);
\draw[blue] (M)--(E)--(e);

\end{tikzpicture}
\endcomment
\caption{$A_5\in l_5$}\label{degen-fig}
\end{figure}
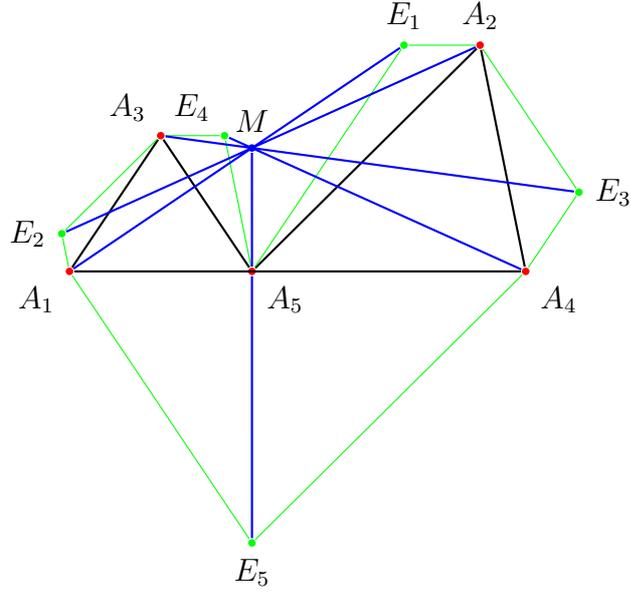

The computation, alluded to in remark \ref{computationproof},
also covers this degenerate case, illustrated in figure \ref{degen-fig}. 
The geometric proof of
the 5-circle theorem in \cite{FHS1, FHS2}   can be extended 
to this situation to show that the four axes $g_1$, $g_2$, $g_3$ and $g_4$
lie in a pencil. If $g_i$ is considered as radical axis of the circles
$c_{i-1,i}$ and $c_{i,i+1}$, this suffices to conclude that all five
radical axes lie in a pencil: 
if the center $M$ is a finite point,
then the fact that $M$ lies on $g_1$, $g_2$, $g_3$ and $g_4$ 
implies that the power of $M$ with respect to $c_{5,1}$ is equal to the
power with respect to $c_{1,2}$, equal to the power with respect to $c_{2,3}$,
$c_{3,4}$ and $c_{4,5}$. As the power of $M$ w.r.t.~$c_{4,5}$ is equal to that
that w.r.t.~$c_{5,1}$, the point $M$ lies on the radical axis $g_5$.
If the center $M$ is infinite, then the centers of all circles involved are
collinear.

The main ingredient of the geometric proof
is Lemma 2 of \cite{FHS1, FHS2}, which we now recall.
\begin{lemma}\label{conditie}
Let $A$, $C$ and $E$ be three non collinear points in $\A^2(k)$, and let
$A$, $C$, $F$, $G$ be collinear, just as $C$, $E$, $H$, $I$ and $B$, $D$,
$G$, $H$ \textup(see figure \textup{\ref{FHS-fig})}. Let $U=[A,F\mid C,G]$,   
$V=[H,D\mid B,G]$ and
$W=[C,H\mid E,I]$. Then the lines $\langle B,U\rangle$, $\langle C,V\rangle$
and $\langle D,W\rangle$ lie in a pencil if and only if
\begin{equation}\label{concur_cond}
\frac{B-G}{B-H}\frac{E-H}{E-C}\frac{F-C}{F-G}=
\frac{D-H}{D-G}\frac{A-G}{A-C}\frac{I-C}{I-H}\;.
\end{equation}
\end{lemma}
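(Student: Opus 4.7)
The proof strategy is to recognize the three lines $\langle B, U\rangle$, $\langle C, V\rangle$, $\langle D, W\rangle$ as cevians of the triangle $BCD$ and invoke Ceva's theorem. The crucial preliminary observation is that $V$ automatically lies on line $BD$: the four points $B, D, G, H$ are collinear by hypothesis, and $V=[H,D\mid B,G]$ is defined on the line through these four points, namely $BD=GH$. Hence $\langle C, V\rangle$ is a genuine cevian from vertex $C$ to side $BD$. Introducing the feet $U'=\langle B,U\rangle\cap\langle C,D\rangle$ and $W'=\langle D,W\rangle\cap\langle B,C\rangle$ of the other two cevians, Ceva's theorem asserts that the three lines are concurrent or parallel if and only if
\[
\frac{U'-C}{U'-D}\cdot\frac{V-D}{V-B}\cdot\frac{W'-B}{W'-C}=\pm 1,
\]
with the sign depending on the orientation convention.

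The middle factor is immediate from Lemma~\ref{lem_C} applied to $V=[H,D\mid B,G]$, yielding $\frac{V-D}{V-B}=\frac{D-G}{B-H}$. For the other two factors I would exploit that projection from $B$ defines a projective map $\pi_B$ from line $AC$ onto line $CD$ fixing $C$ and sending $G\mapsto D$ (using the collinearity of $B, G, D$ on $BD$), and symmetrically that projection from $D$ defines a projective map $\pi_D$ from line $CE$ onto line $BC$ fixing $C$ and sending $H\mapsto B$. Using these projections together with the forms of Lemma~\ref{lem_C} that characterize the axis point via a cross-ratio, one can express $\frac{U'-C}{U'-D}$ as a cross-ratio computed from the data $A, C, F, G$ on line $AC$, and $\frac{W'-B}{W'-C}$ as a cross-ratio computed from the data $C, E, H, I$ on line $CE$.

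The main technical obstacle lies in the bookkeeping of these two cross-ratio reductions. Each one requires a fourth reference point on lines $CD$ and $BC$, typically obtained as the image under $\pi_B$ or $\pi_D$ of a convenient point on $AC$ or $CE$ (e.g.\ the image of $A$ or of $E$, or of the point at infinity on each line). One must then verify that these auxiliary quantities cancel cleanly in the threefold product, leaving a condition involving only the nine named points that rearranges precisely to \eqref{concur_cond}. A viable alternative, should the projective bookkeeping prove unwieldy, is a direct coordinate proof: place $C$ at the origin with line $AC$ as the $x$-axis and line $CE$ as a second line through $C$, parameterize $B$ and $D$ on line $GH$, apply Lemma~\ref{lem_C} to obtain $U, V, W$ as rational functions of the affine coordinates of $A, F, G, H, I, E, B, D$, and verify that the $3\times 3$ concurrence determinant factors as a constant multiple of the difference of the two sides of \eqref{concur_cond}.
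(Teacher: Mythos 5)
Your skeleton matches the paper's: Ceva's theorem in $\triangle BCD$ with cevian feet $U'=\langle B,U\rangle\cap\langle C,D\rangle$ and $W'=\langle D,W\rangle\cap\langle B,C\rangle$ (the paper's $Y$ and $Z$), the observation that $V$ already lies on $\langle B,D\rangle=\langle G,H\rangle$, and the middle factor $\frac{V-D}{V-B}=\frac{D-G}{B-H}$ from Lemma~\ref{lem_C}. But the proof is not complete: the two remaining Ceva ratios are exactly where the content of the lemma lies, and you leave them unevaluated, conceding that the ``bookkeeping'' is the main obstacle. Your proposed route via the perspectivities $\pi_B\colon\langle A,C\rangle\to\langle C,D\rangle$ and $\pi_D\colon\langle C,E\rangle\to\langle B,C\rangle$ does preserve cross-ratios, but to convert a cross-ratio into the simple ratio $\frac{U'-C}{U'-D}$ you must take the point at infinity of $\langle C,D\rangle$ as the fourth point, whose preimage under $\pi_B$ is an auxiliary point (the trace on $\langle A,C\rangle$ of the parallel to $\langle C,D\rangle$ through $B$) that is not among the named points and that you never eliminate. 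Nothing in the proposal shows that these auxiliary quantities cancel, so as written the argument does not reach \eqref{concur_cond}.

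The missing idea, which the paper uses in its own rendition of this argument (the proof of the degenerate variant, Lemma~\ref{degencondlemma}, following \cite{FHS1, FHS2}), is Menelaus' theorem: applied to $\triangle CDG$ with the transversal through $B$, $U$, $U'$ it gives $\frac{U'-C}{U'-D}=\frac{U-C}{U-G}\cdot\frac{B-G}{B-D}$, and applied to $\triangle CBH$ with the transversal through $D$, $W$, $W'$ it gives $\frac{W'-B}{W'-C}=\frac{D-B}{D-H}\cdot\frac{W-H}{W-C}$. The factors $\frac{B-G}{B-D}$ and $\frac{D-B}{D-H}$ combine to $-\frac{B-G}{D-H}$, which supplies the $-1$ required by Ceva, and the remaining ratios are read off from Lemma~\ref{lem_C}: $U=[A,F\mid C,G]$ gives $\frac{U-G}{U-C}=\frac{A-G}{A-C}\frac{F-G}{F-C}$ and $W=[C,H\mid E,I]$ gives $\frac{W-C}{W-H}=\frac{E-C}{E-H}\frac{I-C}{I-H}$; rearranging yields \eqref{concur_cond}. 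Your coordinate fallback would presumably succeed but is likewise not carried out. To repair the proof, replace the cross-ratio bookkeeping by these two applications of Menelaus.
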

\begin{figure}
\comment
\begin{tikzpicture}
[thick,punt/.style={circle,fill,inner sep=1pt},
punk/.style={circle,fill=red,inner sep=1pt}]
\coordinate [label=below left:$A$,punt] (A) at (3.5,-0.5) ;
\coordinate [label=above right:$B$,punt] (B) at (4.5,2) ;
\coordinate [label=above:$C$,punt] (C) at (1,5) ;
\coordinate [label=left:$D$,punt] (D) at (-1.5,2) ;
\coordinate [label=below left:$E$,punt] (E) at (0,0) ;

\coordinate (P) at  (intersection of  E--B and A--D);
\coordinate (Q) at  (intersection of  C--P and A--E);

\coordinate [label=right:$I$,punt](I) at  (intersection of  C--E and D--Q);
\coordinate [label=left:$F$,punt](F) at  (intersection of  C--A and B--Q);

\draw (E) -- (C) -- (A)  (F)-- (B) -- (D) -- (I);

\coordinate [label=above right:$G$,punt](G) at (intersection of  C--A and D--B);
\coordinate [label=above left:$H$,punt](H) at (intersection of  C--E and D--B);

\coordinate (GD) at ($(G)+2*($(D)-(C)$)$) ;
\coordinate (HB) at ($(H)+2*($(B)-(C)$)$) ;
\coordinate (c) at (intersection of G--GD and H--HB);

\coordinate (GA) at ($(G)+2*($(A)-(B)$)$) ;
\coordinate (FC) at ($(F)+2*($(C)-(B)$)$) ;
\coordinate (HE) at ($(H)+2*($(E)-(D)$)$) ;
\coordinate (IC) at ($(I)+2*($(C)-(D)$)$) ;
\coordinate (b) at (intersection of G--GA and F--FC);
\coordinate (d) at (intersection of H--HE and I--IC);

\coordinate [label=below left:$V$,punk](V) at (intersection of C--c and G--H);
\coordinate [label=right:\raisebox{-20pt}{$U$},punk](U) at (intersection of B--b and G--A);
\coordinate [label=below left:$W$,punk](W) at (intersection of D--d and I--H);

\coordinate [label=right:$Z$,punk](Z) at (intersection of D--W and C--B);
\coordinate [label=left:$Y$,punk](Y) at (intersection of C--D and B--U);

\draw [blue] (C)--($(C)!1.1!(V)$) (D)--(Z) (B)--(Y);
\draw [red] (Y)--(C)--(Z);

\end{tikzpicture}
\endcomment
\caption{}\label{FHS-fig}
\end{figure}

\begin{lemma}\label{degencondlemma}
The above lemma also holds if $A$ and $F$ coincide 
\textup(see figure \textup{\ref{degencondfig})}.
\end{lemma}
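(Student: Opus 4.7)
The plan is to derive Lemma \ref{degencondlemma} from Lemma \ref{conditie} by a specialization argument, treating $F$ as a variable point on the line $l = \langle A, C, G\rangle$ while keeping all other data of the configuration fixed.

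By the first formula of Lemma \ref{intsecptlemma}, the point $U = [A, F \mid C, G]$ is a fractional-linear function of the affine parameter of $F$ on $l$, and extends through $F = A$ to a well-defined limit $U_0 = [A, A \mid C, G]$; geometrically, this is the endpoint of the degenerate axis of Remark \ref{axis_for-A-in-l}, associated to the circle through $C$ tangent to $l$ at $A$. Since $V$ and $W$ do not depend on $F$, the lines $\langle C, V\rangle$ and $\langle D, W\rangle$ are fixed while $\langle B, U\rangle$ varies continuously with $F$. Both the concurrency of $\langle B, U\rangle$, $\langle C, V\rangle$, $\langle D, W\rangle$ (the vanishing of a $3\times3$ determinant) and the relation (\ref{concur_cond}), after clearing denominators, become polynomial identities in the parameter of $F$. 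Lemma \ref{conditie} gives their equivalence on the Zariski-dense subset $F \neq A, C, G$, and by Zariski density this equivalence extends to the specialized value $F = A$, which is the assertion of the lemma.

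An equivalent, more geometric route is to retrace the proof of Lemma \ref{conditie} directly in the degenerate configuration. In the Euclidean picture the axes in the lemma are radical axes of auxiliary circles (cf.~Remark \ref{euclrem}), and one of these circles, namely the one through $A$, $F$, $C$, degenerates in the limit $F \to A$ to the circle through $C$ tangent to $l$ at $A$; the tangent-secant version of the power-of-a-point formula furnishes exactly the same relations among powers that underlie the original proof. With this substitution, every ratio and every incidence identity in the proof carries over unchanged.

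The main obstacle is to make the transfer of the \emph{iff} rigorous at $F = A$: in the algebraic approach one must check that no spurious factor of $F - A$ is introduced or cancelled when clearing denominators, and in the geometric approach one must verify the bookkeeping of tangent-secant substitutions for the degenerate circle. Either check is short but essential, after which the lemma follows at once.
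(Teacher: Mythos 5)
Your main route (specialization in $F$ along the line $\langle A,C,G\rangle$) has a genuine logical gap, and it is exactly the gap you flag at the end but then wave away as a ``short check.'' With all points except $F$ fixed, each of the two conditions in Lemma \ref{conditie} --- concurrency of $\langle B,U\rangle$, $\langle C,V\rangle$, $\langle D,W\rangle$ on one hand, and equation \eqref{concur_cond} on the other --- cuts out (generically) a \emph{single} value of the parameter of $F$. Knowing that these two one-point loci agree on the dense set $F\neq A,C,G,\dots$ tells you nothing when both loci are contained in the excluded set: for instance $Z_1=\{A\}$ and $Z_2=\{C\}$ have the same intersection with the dense set, namely $\emptyset$, yet the biconditional fails at $F=A$. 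This is not a pathological corner case; it is precisely the situation in which the lemma is later used (the paper applies it to a configuration where the concurrency and the identity hold \emph{at} $F=A$). Ruling out a ``spurious factor of $F-A$'' is therefore not a formality --- it requires tracking the actual polynomials, which amounts to redoing the computation. Your second, ``geometric'' route is closer to what is needed, but as phrased it relies on radical axes and tangent--secant powers of circles, i.e.\ on the Euclidean interpretation of Remark \ref{euclrem}, whereas the lemma must hold in $\A^2(k)$ for an arbitrary field; and you do not carry it out.

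What the paper actually does is rerun the Ceva--Menelaus proof of Lemma \ref{conditie} verbatim in the degenerate configuration: set $Y=\lijn(B,U)\cap\lijn(C,D)$, $Z=\lijn(D,W)\cap\lijn(C,B)$, reduce concurrency to $\frac{Y-C}{Y-D}\frac{V-D}{V-B}\frac{Z-B}{Z-C}=-1$ via Ceva in $\triangle BCD$, evaluate the first and third ratios by Menelaus, and substitute the ratios from Lemma \ref{intsecptlemma}. The only new ingredient is that for $U=[C,G\mid A,A]$ the relevant ratio becomes the \emph{square} $\frac{U-G}{U-C}=\bigl(\frac{A-G}{A-C}\bigr)^{2}$, which is what produces the factor $\frac{A-C}{A-G}$ on the left and $\frac{A-G}{A-C}$ on the right of the final identity, i.e.\ condition \eqref{concur_cond} with $F$ replaced by $A$. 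That one-line substitution, inserted into the existing bookkeeping, is the entire content of the proof; it is the piece your proposal identifies as necessary but does not supply.
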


\begin{proof}
\begin{figure}
\comment
\begin{tikzpicture}
[thick,punt/.style={circle,fill,inner sep=1pt},
punk/.style={circle,fill=red,inner sep=1pt}]
\coordinate [label=below:${A=F}$,punt] (A) at (3.2,0.3) ;
\coordinate [label=above right:$B$,punt] (B) at (4.5,2) ;
\coordinate [label=above:$C$,punt] (C) at (1,5) ;
\coordinate [label=left:$D$,punt] (D) at (-1.5,2) ;
\coordinate [label=below left:$E$,punt] (E) at (0,0) ;

\coordinate (P) at  (intersection of  E--B and A--D);
\coordinate (Q) at  (intersection of  C--P and A--B);

\coordinate [label=right:$I$,punt](I) at  (intersection of  C--E and D--Q);

\draw (E) -- (C) -- (A) -- (B) -- (D) -- (I);

\coordinate [label=above right:$G$,punt](G) at (intersection of  C--A and D--B);
\coordinate [label=above left:$H$,punt](H) at (intersection of  C--E and D--B);

\coordinate (GD) at ($(G)+2*($(D)-(C)$)$) ;
\coordinate (HB) at ($(H)+2*($(B)-(C)$)$) ;
\coordinate (c) at (intersection of G--GD and H--HB);

\coordinate (GA) at ($(G)+2*($(A)-(B)$)$) ;
\coordinate (AC) at ($(A)+2*($(C)-(B)$)$) ;
\coordinate (HE) at ($(H)+2*($(E)-(D)$)$) ;
\coordinate (IC) at ($(I)+2*($(C)-(D)$)$) ;
\coordinate (b) at (intersection of G--GA and A--AC);
\coordinate (d) at (intersection of H--HE and I--IC);

\coordinate [label=below left:$V$,punk](V) at (intersection of C--c and G--H);
\coordinate [label=right:\raisebox{-20pt}{$U$},punk](U) at (intersection of B--b and G--A);
\coordinate [label=below left:$W$,punk](W) at (intersection of D--d and I--H);

\coordinate [label=right:$Z$,punk](Z) at (intersection of D--W and C--B);
\coordinate [label=left:$Y$,punk](Y) at (intersection of C--D and B--U);

\draw [blue] (C)--($(C)!1.1!(V)$) (D)--(Z) (B)--(Y);
\draw [red] (Y)--(C)--(Z);

\end{tikzpicture}
\endcomment
\caption{}\label{degencondfig}
\end{figure}
The proof follows \cite{FHS1, FHS2}. Let $Y=\lijn(B,U) \cap
\lijn(C,D)$ and $Z=\lijn(D,W)\cap \lijn(C,B)$. By Ceva's theorem, applied
to $\triangle BCD$ and its cevians $\lijn (B,Y)$, $\lijn(C,V)$ and 
$\lijn (D,Z)$, the lines $\lijn (B,U)$, $\lijn(C,V)$ and 
$\lijn (D,W)$ lie in a pencil if and only if 
\[
\frac{Y-C}{Y-D}\frac {V-D}{V-B}\frac {Z-B}{Z-C}=-1\;.
\] 
Menelaus' theorem first for $\triangle CDG$ and the points $B$, $U$ and $Y$
and then for $\triangle CBH$ and the points $D$, $W$ and $Z$ gives 
\[
\frac{Y-C}{Y-D}= \frac {U-C}{U-G}\frac {B-G}{B-D}
\quad\text{and}\quad
\frac {Z-B}{Z-C}=\frac {D-B}{D-H}\frac {W-H}{W-C}\;.
\]
The condition $W=[C,H\mid E,I]$ gives by lemma \ref{intsecptlemma} that 
$\frac {W-C}{W-H} = \frac {E-C}{E-H}\frac {I-C}{I-H}$,
while $V=[H,D|B,G]$ gives $\frac {V-D}{V-B}=\frac{D-G}{B-H}$ and 
finally $U=[C,G|A,A]$ implies $\frac{U-G}{U-C}=(\frac{A-G}{A-C})^2$.
Plugging these expression in  in the equation and rearranging  gives that 
$\lijn (B,U)$, $\lijn(C,V)$ and 
$\lijn (D,W)$ lie in a pencil if and only if 
\[
\frac{A-C}{A-G}\frac {B-G}{B-H}\frac {E-H}{E-C}
=\frac{A-G}{A-C}\frac {I-C}{I-H}\frac{D-H}{D-G}\;.
\]
\end{proof}

\begin{proof}
[Proof that $g_1$, \dots, $g_4$ lie in a pencil]
In order to show that the lines $g_1$, $g_2$ and $g_3$ lie in a pencil, we  
verify the condition of lemma \ref{degencondlemma}
with $(B,C,D,E,A=F)=(A_1,A_2,A_3,A_4,A_5=B_{5,1})$,
$(G,H,I,U,V,W)=(B_{1,2},B_{2,3},B_{3,4},C_1,C_2,C_3)$, where $C_i=l_i\cap g_i$.
Both sides of the equation are equal to 1
by Menelaus' theorem applied
to $\triangle CGH$, on the left with the collinear points $B$, $A$ and  $E$, 
and on the right with $D$, $I$ and $A$.
Similarly one shows that $g_2$, $g_3$ and $g_4$ lie in a pencil.
\end{proof}

\section{Six points}
For six points the axes in general do not lie in a pencil. 

\begin{theorem}\label{6axis}
Let six points $A_1$, \dots, $A_6$, be given,  no three collinear
and such that the six points $B_{i,i+1}=\langle A_{i-1},A_{i+1}\rangle
\cap \langle A_i,A_{i+2}\rangle$ are finite.
Then the following are equivalent:
\begin{enumerate}
\item the six axes $g_1$, \dots, $g_6$, lie in a pencil
\item for some $i$ the axes $g_{i-1}$, $g_i$, $g_{i+1}$ lie in a pencil,
\item the main diagonals of the hexagon
$A_1A_2A_3A_4A_5A_6$ lie in a pencil,
\item the six points $B_{i,i+1}$ lie on a conic.
\end{enumerate}
\end{theorem}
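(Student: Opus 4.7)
The strategy is to prove three equivalences: $(1)\Leftrightarrow(2)$, $(3)\Leftrightarrow(4)$, and $(2)\Leftrightarrow(3)$. The first is essentially immediate: $(1)\Rightarrow(2)$ is trivial, while $(2)\Rightarrow(1)$ is the case $n=6$ of Theorem \ref{mainthm}, already proved in \cite{FHS2}; after cyclically relabelling so that the three axes in a pencil are $g_1, g_2, g_3$, the theorem produces $g_4, g_5, g_6$ in the same pencil. For $(3)\Leftrightarrow(4)$, I would use Pascal and Desargues. Since both $B_{i-1,i}$ and $B_{i,i+1}$ lie on $l_i=\langle A_{i-1},A_{i+1}\rangle$, the side $B_{i-1,i}B_{i,i+1}$ of the hexagon $B_{12}B_{23}B_{34}B_{45}B_{56}B_{61}$ lies on $l_i$. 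The converse of Pascal's theorem then says this hexagon is inscribed in a conic iff the three intersections of opposite sides, $l_2\cap l_5$, $l_3\cap l_6$ and $l_4\cap l_1$, are collinear. But these three points are precisely the intersections of corresponding sides of the triangles $A_1A_3A_5$ and $A_4A_6A_2$ under the pairing $A_1\leftrightarrow A_4$, $A_3\leftrightarrow A_6$, $A_5\leftrightarrow A_2$; by Desargues their collinearity is equivalent to the concurrence of $\langle A_1,A_4\rangle$, $\langle A_2,A_5\rangle$ and $\langle A_3,A_6\rangle$, which is (3).

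The main obstacle is the remaining equivalence $(2)\Leftrightarrow(3)$. I plan to apply Lemma \ref{conditie} with the identification $(A,B,C,D,E)=(A_1,A_2,A_3,A_4,A_5)$ and $(F,G,H,I)=(B_{1,2},B_{2,3},B_{3,4},B_{4,5})$. With these choices $A,C,F,G$ lie on $l_2$, $B,D,G,H$ on $l_3$ and $C,E,H,I$ on $l_4$, and the symmetry of $[P,Q\mid R,S]$ in its two unordered pairs gives $U=C_2$, $V=C_3$, $W=C_4$. The lemma therefore reformulates ``$g_2,g_3,g_4$ lie in a pencil'' as an explicit identity in the ratios along the lines $l_2, l_3, l_4$.

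The real computational heart of the argument is then to show that this ratio identity is equivalent to condition (3). The plan is to exploit the fact that $B_{1,2}\in l_1=\langle A_6,A_2\rangle$ and $B_{4,5}\in l_5=\langle A_4,A_6\rangle$ to introduce $A_6$ into the identity: projecting from $A_6$ onto the lines $l_2$ and $l_4$ expresses the factors $(B_{1,2}-A_3)/(B_{1,2}-B_{2,3})$ and $(B_{4,5}-A_3)/(B_{4,5}-B_{3,4})$ as cross-ratios involving the main diagonals through $A_6$. After these substitutions and further Menelaus rearrangements on the triangles cut out by the short diagonals, the identity should reduce to a Ceva-type concurrence condition for $\langle A_1,A_4\rangle$, $\langle A_2,A_5\rangle$ and $\langle A_3,A_6\rangle$. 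The argument will be bookkeeping-intensive rather than conceptually deep, and as a safety net a direct coordinate computation in the spirit of Remark \ref{computationproof} remains available.
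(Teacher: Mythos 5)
Your proposal is correct and follows essentially the same route as the paper: the $(3)\Leftrightarrow(4)$ argument via the converse of Pascal's theorem and Desargues applied to $\triangle A_1A_3A_5$ and $\triangle A_4A_6A_2$ is identical, and your application of Lemma \ref{conditie} with $(A,\dots,E)=(A_1,\dots,A_5)$ and $(F,G,H,I)=(B_{1,2},B_{2,3},B_{3,4},B_{4,5})$ is exactly the substitution the paper uses. The only deviations are minor: the paper obtains $(2)\Rightarrow(1)$ by showing that (3) is equivalent to the pencil condition for \emph{every} $i$ and chaining overlapping pencils, rather than by citing the $n=6$ case of Theorem \ref{mainthm} from \cite{FHS2} (both are legitimate and non-circular), and it settles your ``computational heart'' by precisely the coordinate computation you keep as a fallback, reducing both the condition of Lemma \ref{conditie} and the concurrence determinant of the main diagonals to the single equation $(e+f-1)cb=(a+b-1)de$.
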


\begin{figure}
\comment
\begin{tikzpicture}
[thick,punt/.style={circle,fill,inner sep=1.5pt}, punk/.style={circle,draw,minimum size=4pt, inner sep=0pt}, pung/.style={circle,fill=green,inner sep=1.5pt}]

\node(e) [ellipse,red,minimum width=7cm,draw] at (0,0) {\vrule width 0pt height 3cm} ;

\coordinate [punt,label=above right:$B_{1,2}$] (b12) at (e.30) ;
\coordinate [punt,label=above:$B_{2,3}$] (b23) at (e.90) ;
\coordinate [punt,label=above left:$B_{3,4}$] (b34) at (e.150) ;
\coordinate [punt,label=left:$B_{4,5}$] (b45) at (e.-165) ;
\coordinate [punt,label=below:$B_{5,6}$] (b56) at (e.-90)  ;
\coordinate [punt,label=below right:$B_{6,1}$] (b61) at (e.-15)  ;

\coordinate [punt,label=right:$A_{1}$] (a1) at (intersection of  b23--b12 and b56--b61);
\coordinate [punt,label=right:$A_{2}$] (a2) at (intersection of  b34--b23 and b61--b12);
\coordinate [punt,label=left:$A_{3}$] (a3) at (intersection of  b45--b34 and b12--b23);
\coordinate [punt,label=left:$A_{4}$] (a4) at (intersection of  b56--b45 and b23--b34);
\coordinate [punt,label=left:$A_{5}$] (a5) at (intersection of  b61--b56 and b34--b45);
\coordinate [punt,label=right:$A_{6}$] (a6) at (intersection of  b12--b61 and b45--b56);

\coordinate [punk]  (e1) at (intersection of b12--{$(b12)+(a6)-(a1)$} and b61--{$(b61)+(a2)-(a1)$});
\coordinate [punk]  (e2) at (intersection of b23--{$(b23)+(a1)-(a2)$} and b12--{$(b12)+(a3)-(a2)$});
\coordinate [punk]  (e3) at (intersection of b34--{$(b34)+(a2)-(a3)$} and b23--{$(b23)+(a4)-(a3)$});
\coordinate [punk]  (e4) at (intersection of b45--{$(b45)+(a3)-(a4)$} and b34--{$(b34)+(a5)-(a4)$});
\coordinate [punk]  (e5) at (intersection of b56--{$(b56)+(a4)-(a5)$} and b45--{$(b45)+(a6)-(a5)$});
\coordinate [punk]  (e6) at (intersection of b61--{$(b61)+(a5)-(a6)$} and b56--{$(b56)+(a1)-(a6)$});
\coordinate [punk] (n) at (intersection of a1--a4 and a2--a5);
\draw [thin,green] (e1)--(b12)--(e2)--(b23)--(e3)--(b34)--(e4)--(b45)--(e5)--(b56)--(e6)--(b61)--(e1);
\coordinate [punt] (m) at (intersection of a1--e1 and a2--e2);

\coordinate (d12) at (intersection of a3--a4 and a5--a6);
\coordinate (d23) at (intersection of a4--a5 and a6--a1);
\coordinate (d34) at (intersection of a5--a6 and a1--a2);
\coordinate (d45) at (intersection of a6--a1 and a2--a3);
\coordinate (d56) at (intersection of a1--a2 and a3--a4);
\coordinate (d61) at (intersection of a2--a3 and a4--a5);

\coordinate [pung] (t12) at (intersection of a1--a2 and d12--b12);
\coordinate [pung] (t23) at (intersection of a2--a3 and d23--b23);
\coordinate [pung] (t34) at (intersection of a3--a4 and d34--b34);
\coordinate [pung] (t45) at (intersection of a4--a5 and d45--b45);
\coordinate [pung] (t56) at (intersection of a5--a6 and d56--b56);
\coordinate [pung] (t61) at (intersection of a6--a1 and d61--b61);

\draw [green] (t12) .. controls ($(t12)!0.57!(a2)$) and ($(t23)!0.57!(a2)$)  .. (t23)  .. controls ($(t23)!0.57!(a3)$) and ($(t34)!0.57!(a3)$)  .. 
(t34)  .. controls ($(t34)!0.57!(a4)$) and ($(t45)!0.57!(a4)$) .. 
(t45) .. controls ($(t45)!0.57!(a5)$) and ($(t56)!0.57!(a5)$) .. 
(t56) .. controls ($(t56)!0.57!(a6)$) and ($(t61)!0.57!(a6)$) ..
(t61) .. controls ($(t61)!0.57!(a1)$) and ($(t12)!0.57!(a1)$) ..(t12) ; 

\draw [thin,brown] (a1) -- (a2) -- (a3) -- (a4) -- (a5) -- (a6) -- (a1);
\draw (a1) -- (a3) --(a5) -- (a1) (a2) -- (a4) -- (a6) -- (a2);

\draw [blue] (a1)--(e1)--(m)--(e5)--(a5) (a6)--(e6)--(m);
\draw [blue] (a4)--(e4)  --  node[below left] {$g_4$} (m) ;
\draw [blue] (a2)--(e2)  --  node[above right] {$\;\;g_2$} (m) ;
\draw [blue] (a3)--(e3)  --  node[left] {$g_3$} (m) ;
\draw [dashed] (a1)--(n)--(a4) (a2)--(n)--(a5) (a3)--(n)--(a6);

\end{tikzpicture}
\endcomment
\caption{}\label{ellipse}
\end{figure}

\begin{proof}
$(1)\implies (2) \implies (3) \implies (1)$:\\
We  show that condition (3) is equivalent to $g_{i-1}$, $g_i$, $g_{i+1}$ 
lying in a pencil for all $i$. But as
the condition on the main diagonals does not single out three
lines, it suffices to prove equivalence for one specific $i$, say $i=3$.

We take affine coordinates $(x,y)$ 
with $A_3$ as origin, $A_2=(0,1)$, $A_4=(1,0)$,
$A_1=(a,b)$, $A_6=(c,d)$ and $A_5=(e,f)$.
We compute $B_{2,3}=(\frac a{a+b},\frac b{a+b})$, 
$B_{3,4}=(\frac e{e+f},\frac f{e+f})$,
$B_{1,2}=(\frac {ac}{bc-ad+a},\frac {bc}{bc-ad+a})$
and
$B_{4,5}=(\frac {ed}{ed-fc+f},\frac {fd}{ed-fc+f})$.
The condition  \eqref{concur_cond} of lemma \ref{conditie}
(with the labels $A, \dots, I$ applied, 
in order, to $A_1, \dots, A_5$, $B_{1,2}$,
$B_{2,3}$, $B_{3,4}$, $B_{4,5}$) %
then becomes
\[
\frac {a(e+f)}{(a+b)e} \cdot \frac{e+f-1}{e+f} \cdot \frac{c(a+b)}{(c+d-1)a}
=
\frac{f(a+b)}{(e+f)b}\cdot\frac{a+b-1}{a+b}\cdot\frac{d(e+f)}{f(c+d-1)}\;,
\]
which simplifies to
\begin{equation}\label{zescond}
(e+f-1)cb=(a+b-1)de\;.
\end{equation}
Here we used $a+b\neq 0$ (as $l_2\nparallel l_3$), $e+f\neq0$ and $a\neq0$
(as $A_2\notin l_2$), $f\neq0$ and $c+d\neq1$ (as $A_6\notin l_3$).

The diagonal $\lijn(A_3,A_6)$ has equation $dx-cy=0$, 
the diagonal $\lijn(A_1,A_4)$
is given by $bx+(1-a)y=b$ and  $\lijn(A_2,A_5)$ by $(1-f)x+ey=e$.
The condition that these three diagonals lie in a pencil is given
by the vanishing of the determinant
\[
\Delta= 
\begin{vmatrix}
b & 1-a &-b\\
1-f&  e &-e\\
d &  -c & 0
\end{vmatrix} = 
\begin{vmatrix}
0 & 1-a-b &-b\\
1-e-f&  0 &-e\\
d &  -c & 0
\end{vmatrix}
\;.
\]
Computing this determinant with Sarrus' rule shows that
$\Delta=0$ if and only if equation \eqref{zescond} holds.

$(3) \iff  (4)$:\\
The lines $\langle A_1, A_4\rangle$,  $\langle A_2, A_5\rangle$
and  $\langle A_3, A_6\rangle$ lie in a pencil if and only if the triangles 
$\triangle A_1A_3A_5$ and $\triangle A_4A_6A_2$ are perspective 
 from a center which, by Desargues's theorem, holds if and only if they are perspective from an axis.  
Note that the line $l_i=\langle A_{i-1},A_{i+1}\rangle$ coincides with the line
$\langle B_{i-1,i}, B_{i,i+1} \rangle$. Therefore the axis of
perspectivity is also the Pascal line of the points $B_{i,i+1}$,
whence these points lie on a conic if and only if the original three 
lines lie in a pencil.
\end{proof}

\begin{remark}
If $\operatorname{char} k\neq 2$ the hexagon $A_1A_2A_3A_4A_5A_6$
circumscribes a conic by Brianchon's theorem. This is not true
in characteristic 2, as then all tangents to a conic pass through
one point.
Figure \ref{ellipse} illustrates the result in the 
euclidean plane. To make the conics clearly visible the axes $g_i$ 
are constructed  by drawing parallels through
$B_{i-1,i}$ and $B_{i,i+1}$.
\end{remark}

\begin{remark}\label{zesdegen}
The above proof shows that under weaker conditions, 
the equivalence between the axes $g_2$,
$g_3$ and $g_4$ lying in a pencil and the main diagonals lying in a 
pencil continues to hold.
The condition  \eqref{concur_cond}
applied to $(A,B,C,D,E)=(A_1,A_2,A_3,A_4,A_5)$ does not involve
the position of the point $A_6$. The proof, 
when written in homogeneous coordinates, therefore remains valid
should $A_6$ lie at infinity ($l_1\parallel l_5$), 
or should $A_6\in l_2,l_4,l_6$.
Also the degenerations $A_1\in l_5$, $A_5\in l_1$, $A_2\in l_6$, $A_4\in l_6$ or
$l_5\parallel l_6$, $l_1\parallel l_6$ do not affect the conclusion.
\end{remark}

\section{The proof of the main result}
We have now seen that Theorem \ref{mainthm} holds for extended versions
of the cases $n=5$ and $n=6$.  For $n \ge 7$ we find it convenient to assume 
that the axes $g_2$, \dots, $g_{n-2}$ lie in a pencil.

The proof of Theorem \ref{mainthm} proceeds by induction on the number of vertices.
The idea is the following. Suppose $A_1$, \dots, $A_n$ are given 
with $g_2$, \dots, $g_{n-2}$ in a pencil. 
Then we construct a sequence $A_1$, $A_2$, $A_{3,4}$,
$A_5$, \dots, $A_n$ of $n-1$ points by replacing $A_3$ and $A_4$ 
by the intersection $A_{3,4}$ of $l_2$ and $l_5$. For the new configuration
the axes $g_2$, $g_ {3,4}$,  $g_5$, \dots, $g_{n-2}$ lie in a pencil
with the same center, and the induction hypothesis applies, provided
the configuration satisfies the assumptions of the theorem.
Sometimes this will not be the case, but we shall see that without loss of 
generality,  one can replace the given configuration by one which does 
satisfy the assumptions.

Three consecutive axes $g_{i-1}$, $g_i$, $g_{i+1}$
are determined by seven points $A_{i-3}$, $A_{i-2}$, $A_{i-1}$, $A_i$,
$A_{i+1}$, $A_{i+2}$ and $A_{i+3}$. Let $D_i$ be the (possibly infinite) 
intersection point of $l_{i-2}=\lijn(A_{i-3},A_{i-1})$ and  
$l_{i+2}=\lijn(A_{i+1},A_{i+3})$. The point $D_i$ exists
as $l_{i-2}\neq l_{i+2}$, because $A_{i+1}\notin l_{i-2}$. The axes
$g_{i-1}$, $g_i$, $g_{i+1}$ are also the axes through $A_{i-1}$, 
$A_i$, $A_{i+1}$ in the hexagon $D_iA_{i-2}A_{i-1}A_iA_{i+1}A_{i+2}$.
This hexagon does not necessarily satisfy all the conditions \eqref{ass1},
\eqref{ass2}, \eqref{ass3}, but by remark \ref{zesdegen} less is
needed to conclude that the lines $g_{i-1}$, $g_i$, $g_{i+1}$
lie in a pencil if and only if the lines $\lijn(A_{i-2},A_{i+1})$,
$\lijn(A_{i-1},A_{i+2})$ and $\lijn(A_i,D_i)$ lie in a pencil
(see also figure \ref{zescriterium}). Only the three conditions
$l_{i+1}\neq \lijn(A_{i+2},A_{i-2})$, $l_{i-1}\neq \lijn(A_{i+2},A_{i-2})$
and $l_{i-2}\neq l_{i+2}$ are not directly covered by the properties
of the original configuration and the allowable degenerations from the remark.
We already showed that $l_{i-2}\neq l_{i+2}$. If 
$l_{i-1}= \lijn(A_{i+2},A_{i-2})$, then $A_{i-2}$, $A_i$ and $A_{i+2}$
are collinear, which whould imply that $l_{i-1}=l_{i+1}$,
contradicting the condition  \eqref{ass2} for the original 
configuration; for the same reason  $l_{i+1}\neq \lijn(A_{i+2},A_{i-2})$.
So the condition to test is indeed that each triple
of lines $\lijn(A_{i-2},A_{i+1})$,
$\lijn(A_{i-1},A_{i+2})$ and $\lijn(A_i,D_i)$ lies in a pencil.

\begin{figure}
\comment
\begin{tikzpicture}
[thick,punt/.style={circle,fill=red,inner sep=1pt},
punk/.style={circle,fill=green,inner sep=1pt}]

\coordinate [label=left:$A_{i-2}$,punt] (B) at (0,5) ;
\coordinate [label=left:$A_{i-1}$,punt] (C) at (-2,2.5) ;
\coordinate [label=below left:$A_i$,punt] (D) at (0,0) ;
\coordinate [label=below right:$A_{i+1}$,punt] (E) at (5,1) ;
\coordinate [label=right:$A_{i+2}$,punt] (F) at (7,2.5) ;
\coordinate [label=right:$A_{i+3}$,punt] (G) at (5,3.5) ;

\coordinate (M) at (intersection of B--E and C--F);
\coordinate  [label=above:$D_i$,punk] (Q) at (intersection of D--M and E--G);

\coordinate [label=above left:$A_{i-3}$,punt] (A) at ($(C)!0.7!(Q)$) ;

\draw (B) -- (D) -- (F)  (G) -- (E) -- (C)--(A);
\draw  [dashed] (G)--(Q)--(A);

\draw[thin,brown] (B)--(E) (D)--(Q) (C)--(F) ;

\end{tikzpicture}
\endcomment
\caption{}\label{zescriterium}
\end{figure}

In the following lemma we consider a sequence of points $A_0$, $A_1$, \dots,
$A_6$, which may be part of a larger configuration. Because of the lemma's 
limited scope, 
we require only that the indices in the assumptions
\eqref{ass1} -- \eqref{ass3} 
lie between 0 and 6.

\begin{lemma}\label{move}
Let $A_0$, $A_1$, \dots, $A_5$, $A_6$ be a sequence of 
distinct points satisfying 
the assumptions \eqref{ass1} -- \eqref{ass3} 
limited to indices between 0 and 6, 
such that the axes $g_2$, $g_3$
and $g_4$ lie in a pencil.
Choose a point $A_3'\in l_4$ with $A_3'\neq
B_{3,4}$ and  $A_3'\neq \langle A_1,A_2\rangle\cap l_4$.
Let $P=\langle A_1,A_4\rangle \cap \langle A_2,A_3'\rangle $.
Define the point $A_2'\in l_1$ as $A_2'=l_1\cap \langle P, A_3\rangle$.
Suppose that the
sequence $A_0$, $A_1$, $A_2'$, $A_3'$, $A_4$, $A_5$, $A_6$
also satisfies the 
limited
assumptions \eqref{ass1} -- \eqref{ass3}. Denote the axes
of this new configuration by $g_i'$. Then $g_4=g_4'$ and the
the axes $g_2'$, $g_3'$ and $g_4'$ lie in the same pencil
as $g_2$, $g_3$ and $g_4$.
If moreover one of the axes $g_1$, $g_1'$  is defined and also lies in the 
same pencil,
then $g_1'=g_1$.
\end{lemma}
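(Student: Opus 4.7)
The plan is to deduce all three assertions from the hexagon criterion of Theorem~\ref{6axis} together with two applications of Pappus's theorem, exploiting that by construction the point $P=\lijn(A_2,A_3')\cap\lijn(A_2',A_3)$ already lies on $\lijn(A_1,A_4)$.

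For $g_4=g_4'$, both axes pass through $A_4$, so it suffices to show that the axis points $C_4,C_4'\in l_4$ coincide. Both are determined by involutions on $l_4$ sharing the pair $(B_{4,5},A_5)$, so I must check that the involution $\iota$ with pairs $(B_{4,5},A_5)$ and $(A_3,B_{3,4})$ also swaps $A_3'$ with $B_{3,4}'=\lijn(A_2',A_4)\cap l_4$. The map $\psi\colon l_4\to l_4$ sending $A_3'\mapsto A_2'\mapsto B_{3,4}'$ is a composition of perspectivities through $A_2$, $A_3$, $A_4$, hence a projectivity, and I verify $\psi=\iota$ at three points. When $A_3'=A_3$, one computes $A_2'=l_1\cap\lijn(A_2,A_3)=A_2$ and so $B_{3,4}'=B_{3,4}=\iota(A_3)$. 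When $A_3'=A_5$, the hypothesis, via Theorem~\ref{6axis} applied to the hexagon $H=D_3A_1A_2A_3A_4A_5$ with $D_3=l_1\cap l_5$, places $P=Q:=\lijn(A_2,A_5)\cap\lijn(A_1,A_4)$ on $\lijn(A_3,D_3)$, giving $A_2'=l_1\cap\lijn(A_3,D_3)=D_3$ and $B_{3,4}'=l_5\cap l_4=B_{4,5}=\iota(A_5)$. At the limiting value $A_3'=B_{3,4}$ one finds $P=A_4$, $A_2'=l_1\cap\lijn(A_3,A_4)$ and $B_{3,4}'=\lijn(A_3,A_4)\cap l_4=A_3=\iota(B_{3,4})$.

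For $g_2',g_3'\ni M$, the hypothesis is equivalent via Theorem~\ref{6axis} applied to $H$ to the concurrence of $\lijn(D_3,A_3)$, $\lijn(A_1,A_4)$ and $\lijn(A_2,A_5)$ at $Q$. Pappus applied to the triples $(A_2,A_2',D_3)$ on $l_1$ and $(A_3,A_3',A_5)$ on $l_4$ yields the three collinear points $P$, $\lijn(A_2',A_5)\cap\lijn(D_3,A_3')$, and $Q$; since $P,Q\in\lijn(A_1,A_4)$, the middle point also lies on $\lijn(A_1,A_4)$, which is the diagonal concurrence for the new hexagon $H'=D_3A_1A_2'A_3'A_4A_5$. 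Hence the six axes of $H'$ lie in a common pencil, and since the axes of $H'$ through the unchanged vertices $D_3$ and $A_5$ use only unchanged data and therefore coincide with the corresponding axes of $H$, this pencil is still centered at $M$. For part (c), if $g_1$ is defined and lies in the pencil, then in the hexagon $H''=D_2A_0A_1A_2A_3A_4$ (with $D_2=l_0\cap l_4$) the axes $g_1,g_2,g_3$ lie in the pencil at $M$, so the diagonals $\lijn(D_2,A_2),\lijn(A_0,A_3),\lijn(A_1,A_4)$ concur. Pappus applied to $(A_2,A_0,A_2')\subset l_1$ and $(A_3,D_2,A_3')\subset l_4$ produces the three collinear points $\lijn(A_2,D_2)\cap\lijn(A_0,A_3)$, $\lijn(A_0,A_3')\cap\lijn(A_2',D_2)$, and $P$; two of them lie on $\lijn(A_1,A_4)$, so the middle point does too, giving the diagonal concurrence for $H'''=D_2A_0A_1A_2'A_3'A_4$ and hence its axes in a pencil at $M$. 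Thus $g_1'\ni M$, and since $g_1$ and $g_1'$ both pass through $A_1$ and $M$, they coincide. The main conceptual obstacle is choosing the two Pappus triples so that the constructed point $P$ appears as one of the three collinear Pappus points; once this is recognized, the argument reduces to routine verifications of three projectivity values and two diagonal concurrences.
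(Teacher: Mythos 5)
Your argument for $g_4=g_4'$ is correct and takes a genuinely different route from the paper: you characterize $C_4$ and $C_4'$ via the involution of Lemma~\ref{invo} on $l_4$ and identify the projectivity $A_3'\mapsto A_2'\mapsto B_{3,4}'$ with that involution by evaluating at the three points $A_3$, $A_5$, $B_{3,4}$ (which is legitimate even though two of them are forbidden positions for $A_3'$, since you are comparing two projectivities of the whole line). The evaluation at $A_5$ uses only the equivalence between ``$g_2,g_3,g_4$ in a pencil'' and the concurrence of the diagonals of the hexagon $D_3A_1A_2A_3A_4A_5$, which the paper justifies, including in degenerate cases, via Remark~\ref{zesdegen}. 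The paper instead obtains $g_4=\bar g_4=g_4'$ from Theorem~\ref{6axis} applied to the hexagon $A_1A_2A_3A_4A_3'A_2'$ followed by Pappus applied to $A_3,A_3',A_5$ and three points at infinity.

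The gap is in your identification of the pencil of $g_2',g_3',g_4'$ with the pencil at $M$. You pass from the concurrence of the diagonals of $H'=D_3A_1A_2'A_3'A_4A_5$ to the statement that \emph{all six} axes of $H'$ lie in one pencil, and you anchor that pencil at $M$ through the axes of $H$ and $H'$ at the shared vertices $D_3$ and $A_5$. This needs the full implication $(3)\Rightarrow(1)$ of Theorem~\ref{6axis} for the hexagons $H$ and $H'$, and these hexagons need not satisfy its hypotheses: $D_3=l_1\cap l_5$ may be at infinity, three vertices may be collinear (for instance $D_3$, $A_2$, $A_5$ are collinear whenever $A_5\in l_1$, which the limited assumptions \eqref{ass1}--\eqref{ass3} do not exclude), and some of the hexagon's intersection points may be at infinity. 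Remark~\ref{zesdegen} extends to such degenerate hexagons only the equivalence between the concurrence of the diagonals and the middle three axes lying in \emph{a} pencil; it says nothing about the axes through $D_3$ and $A_5$, which are not among $g_2,g_3,g_4$ and are therefore not known to pass through $M$. Knowing $g_4'=g_4\ni M$ supplies only one line of the pencil of $g_2',g_3',g_4'$, which does not determine it. Part (c) inherits the same problem (though, once $g_2',g_3'\ni M$ is secured, it can be finished using only the diagonal criterion for $D_2A_0A_1A_2'A_3'A_4$). The paper circumvents all of this by choosing the mixed hexagon $A_1A_2A_3A_4A_3'A_2'$, whose nondegeneracy is exactly what the exclusions $A_3'\neq B_{3,4}$ and $A_3'\neq\lijn(A_1,A_2)\cap l_4$ guarantee, and four of whose six axes are literally $g_2$, $g_3$, $g_2'$, $g_3'$, so that the common pencil is identified with the pencil at $M$ automatically. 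To repair your proof you would either have to switch to that hexagon or extend Theorem~\ref{6axis} in full strength to the degenerate hexagons $H$ and $H'$.
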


\begin{figure}
\comment
\begin{tikzpicture}
[thick,punt/.style={circle,fill=red,inner sep=1pt},
punk/.style={circle,fill=green,inner sep=1pt},
punb/.style={circle,fill,inner sep=1pt}]

\coordinate [label=left:$A_{1}$,punt] (B) at (0,5) ;
\coordinate [label=left:$A_{2}$,punt] (C) at (-1,1.5) ;
\coordinate [label=below:$A_3$,punt] (D) at (0,0) ;
\coordinate [label=below right:$A_{4}$,punt] (E) at (5,1) ;
\coordinate [label=right:$A_{5}$,punt] (F) at (7,2.5) ;
\coordinate [label=right:$A_{6}$,punt] (G) at (5,3.5) ;

\coordinate [label=below:$A_3'$,punt] (DD) at ($(F)!1.1!(D)$) ;

\coordinate (N) at (intersection of B--E and C--F);
\coordinate   (Q) at (intersection of D--N and E--G);

\coordinate [label=below right:$A_{0}$,punt] (A) at ($(C)!0.8!(Q)$) ;

\coordinate [label=above:$P$,punb] (P) at (intersection of B--E and DD--C);
\coordinate [label=above:$A_2'$,punt] (CC) at (intersection of A--C and P--D);

\coordinate (BC) at (intersection of A--C and B--D) ;
\coordinate (CD) at (intersection of B--D and C--E) ;
\coordinate  [label=below:$B_{3,4}$,punb] (DE) at (intersection of C--E and D--F) ;
\coordinate [punb] (EF) at (intersection of D--F and E--G) ;
\coordinate (BCC) at (intersection of A--CC and B--DD) ;
\coordinate (CCDD) at (intersection of B--DD and CC--E) ;
\coordinate  [punb](DDE) at (intersection of CC--E and DD--F) ;

\coordinate (c) at (intersection of B--{$(B)+(E)-(C)$} and D--{$(D)+(Q)-(C)$});
\coordinate (cc) at (intersection of B--{$(B)+(E)-(CC)$} and DD--{$(DD)+(Q)-(CC)$});
\coordinate (d) at (intersection of C--{$(C)+(F)-(D)$} and E--{$(E)+(B)-(D)$});
\coordinate (dd) at (intersection of CC--{$(CC)+(F)-(DD)$} and E--{$(E)+(B)-(DD)$});

\coordinate (e) at (intersection of D--{$(D)+(G)-(E)$} and F--{$(F)+(C)-(E)$});
\coordinate (ee) at (intersection of DD--{$(DD)+(G)-(E)$} and F--{$(F)+(CC)-(E)$});
\coordinate (eee) at (intersection of DD--{$(DD)+(C)-(E)$} and D--{$(D)+(CC)-(E)$});

\coordinate [label=below left:$M$,punk] (M) at (intersection of C--c and D--d) ;

\draw (B) -- (D) -- (F) (DD) -- (D)  (G) -- (E) -- (C)--(A);
\draw [dashed] (B)--(DD) (CC)--(E);
\draw[thin,green] (e)--(F);
\draw[thin,green,dashed]  (DD)--(ee)--(F)  ;
\draw[thin,brown] (D)--(eee)--(DD);

\draw[thin,red] (DD)--(P)--(B)--(E) (P)--(CC)--(D);

\draw[blue]   (E)--(M);
\draw[blue,thin]   (e)--(M) (E)--(eee);
\draw[blue,thin,dashed] (e)--(ee) ;


\end{tikzpicture}
\endcomment
\caption{}\label{movefig}
\end{figure}

\begin{proof}
The construction is illustrated in figure \ref{movefig}.
We want  to apply the 6-axes theorem (Theorem \ref{6axis}) to the points $A_1$, $A_2$, $A_3$, $A_4$,
$A_3'$, $A_2'$. Therefore we check that 
they are distinct and
satisfy assumptions \eqref{ass1} -- \eqref{ass3}.

By construction $A_2'=A_2$ if and only if $A_3'=A_3$, 
but then there is nothing to prove. We therefore assume
$A_3'\neq A_3$. This also gives $l_3'\neq l_3$ and $l_2'\neq l_2$. 
We have that $A_3'\in l_4$; as $A_2\notin l_4$ and $A_4\notin l_4$,
$A_3'\neq A_2$ and 
$A_3'\neq A_4$; similarly for $A_2'$.
The only other requirements that do not follow from the assumptions
on $A_0$, $A_1$,  $A_2$, $A_3$, $A_4$, $A_5$, $A_6$ and 
$A_0$, $A_1$, $A_2'$, $A_3'$, $A_4$, $A_5$, $A_6$, are
$A_2\notin l_2'$, $A_2'\notin l_2$, $A_3\notin l_3'$ and $A_3'\notin l_3$.

If $A_3'\in l_3$, then $A_3'=B_{3,4}$. 
If $A_3\in l_3'=\langle A_2',A_4\rangle$, then $A_4\in \langle A_2',A_3 \rangle
\cap \langle A_1,A_4\rangle = \{P\}$, which again implies the 
excluded case 
$B_{3,4} =A_2A_4 \cap A_3A_5 = A_2P \cap A_3A_5 = A_3'$.

The condition $A_3'\neq \langle A_1,A_2\rangle\cap l_4$ gives
$A_2\notin \langle A_1,A_3'\rangle= l_2'$.
If $A_2'\in l_2=\langle A_1,A_3\rangle$, then $P\in \langle A_1,A_3\rangle$.
As also $P\in \langle A_1,A_4\rangle$ this implies that $P=A_1$ and
again $A_2\in \langle A_1,A_3'\rangle=l_2'$.

As $\lijn(A_1,A_4)$, $\lijn(A_2,A_3')$ and $\lijn(A_3,A_2')$ lie
in a pencil, the axes $\bar g_1$, $\bar g_2$, $\bar g_3$,
$\bar g_4$, $\bar g_3'$ and $\bar g_2'$ of the hexagon
$A_1A_2A_3A_4A_3'A_2'$ lie in a pencil. Because  $A_0$, $A_2$ and $A_2'$ are 
collinear and also $A_5$, $A_3$ and $A_3'$, we have that $\bar g_2=g_2$,
$\bar g_3=g_3$,  $\bar g_2'=g_2'$ and $\bar g_3'=g_3'$.

As $g_4$ lies in the pencil of $g_2=\bar g_2$ and  $g_3=\bar g_3$,
the axis $g_4$ also  coincides with $\bar g_4$.
The axis $g_4$ is constructed as $\lijn(A_4,E_4)$, with $E_4$ the
intersection point of the parallel to $l_5=\lijn(A_4,A_6)$  through $A_3$
and the parallel to $l_3=(A_4,A_2)$ through $A_5$. 
For  $\bar g_4=\lijn(A_4,\bar E_4)$ one finds $\bar E_4$ as intersection
point of  the parallel to $\bar l_3=l_3=\lijn(A_4,A_2)$ through $A_3'$
and the parallel to $\bar l_3'=l_3'=\lijn(A_4,A_2')$ through $A_3$.
For $g_4'=\lijn(A_4,E_4')$ one 
intersects the parallel to $l_5=\lijn(A_4,A_6)$ 
through $A_3'$
with the parallel to $l_3'=(A_4,A_2')$ through $A_5$.
By Pappus' theorem applied to the collinear points $A_3$, $A_3'$ and $A_5$
and the points at infinity of the three lines $l_3$, $l_3'$ and $l_5$
the points $E_4$, $\bar E_4$ and $E_4'$ are collinear. As $E_4$ and $\bar E_4$
lie on $g_4=\bar g_4$, the point $E_4'$ also lies on it and therefore
$g_4'=g_4$.

Furthermore, if $g_1$ lies in the  pencil the same argument
gives that $g_1'=g_1$.
\end{proof}

\begin{remark}
The requirement that the
sequence $A_0$, $A_1$, $A_2'$, $A_3'$, $A_4$, $A_5$, $A_6$
also satisfy assumptions \eqref{ass1} -- \eqref{ass3}
implies only finitely many forbidden positions
for $A_3'$. Those can be made explicit.
One finds that $A_3'$ should not be equal to $A_5$, $\lijn(A_0,A_1)\cap l_4$,
$l_1\cap l_4$, $l_5\cap l_4$, $\lijn(A_1,A_4)\cap l_4$ and also not
equal to $l_4\cap m_1$, where $m_1$ is the line through $A_1$, parallel to
$l_1$. 
There will, of course, be a few more forbidden positions when
the given points 
are part of a larger configuration. 
For $A_2'$ one has corresponding forbidden positions. As one finds $A_2'$ from $A_3'$
by first projecting $l_4$ from $A_2$ onto the line $\lijn(A_1,A_4)$ and
then projecting from $A_3$ on the line $l_1$, those positions of $A_2'$ yield
further forbidden positions of $A_3'$.  

This covers all assumptions except
$l_2\nparallel l_3'$, $A_2'\notin l_2'$ and $A_3'\notin l_3'$. They involve the
position of the point $B_{2,3}$: in the first case it lies at infinity, 
in the second $B_{2,3}=A_2'$ and finally $B_{2,3}=A_3'$. 
The point $B_{2,3}'$ is the intersection $\lijn(A_1,A_3')\cap
\lijn(A_4,B_{3,4}')$ and as $(A_3',B_{3,4}')$ is a pair of an involution
on $l_4$ the point $B_{2,3}'$ moves on a (possibly degenerate) conic
through $A_1$, $A_4$ and $B_{2,3}$, as $A_3'$ moves on $l_4$. The intersection
of this conic with the line at infinity, $l_1$ and $l_4$ gives at most
six forbidden positions for $B_{2,3}'$ and therefore for $A_3'$.

On the other hand, because we could, if needed, embed the given plane 
in a plane over a field extension
we can assume without loss of generality that there are infinitely many 
allowable positions for $A'_3$ on $l_4$.
\end{remark}

\begin{proof}[Proof of Theorem \ref{mainthm}]
Suppose distinct points $A_1$, \dots, $A_n$  ($n\geq 7$)
are given, satisfying the assumptions
\eqref{ass1},  \eqref{ass2}, \eqref{ass3} 
and such that the $n-3$ lines
$g_2$, \dots, $g_{n-2}$ lie in a pencil.
The lines $l_2$ and $l_5$ are not equal, as $A_4\in l_5$, but $A_4\notin l_2$.\\
Let
\begin{align*}
A_{3,4}&=l_2 \cap l_5 \text{ (possibly at infinity),}\quad
 l_{3,4} = \lijn(A_2,A_5), \\
 B_3 &= l_{6,4} \cap l_2\;
\text{ and } \;B_4 = l_{3,4} \cap l_5.
\end{align*}
Consider the sequence of $n-1$ points $A_1$, $A_2$, $A_{3,4}$, $A_5$,
\dots, $A_n$.
Suppose first that $A_{3,4}$ is a finite point and that the sequence also
satisfies the assumptions 
\eqref{ass1} -- \eqref{ass3}, as in figure \ref{mainfig}.

\begin{figure}
\comment
\begin{tikzpicture}
[thick,scale=1.2,punt/.style={circle,fill=red,inner sep=1pt},
punk/.style={circle,fill,inner sep=1pt}]

\coordinate [label=below:$A_{3,4}$,punt] (p34) at (0,0)  ;
\coordinate [label=left:$A_3$,punt] (p3) at (-3,2.5);
\coordinate [label=right:$A_4$,punt] (p4) at (3,3);
\coordinate [label=below:$A_6$,punt] (p6) at ($(p34)!0.33!(p4)$) ;
\coordinate [label=below:$A_1$,punt] (p1)  at ($(p34)!0.8!(p3)$) ;
\coordinate [label=right:$A_2$,punt] (p2) at (2,6);
\coordinate [label=left:$A_5$,punt] (p5) at (-2,6);

\coordinate (e3) at (intersection of p2--{$(p2)+(p5)-(p3)$} and p4--{$(p4)+(p3)$});
\coordinate (e4) at (intersection of p3--{$(p3)+(p4)$} and p5--{$(p5)+(p2)-(p4)$});

\coordinate (e34) at (intersection of p2--{$(p2)+(p4)$} and p5--{$(p5)+(p3)$});

\coordinate (e2) at (intersection of p34--{$(p2)-(p4)$} and p3--{$(p3)+(p2)-(p5)$});
\coordinate (e5) at (intersection of p34--{$(p5)-(p3)$} and p4--{$(p4)+(p5)-(p2)$});

\coordinate [label=above:$M$,punt] (M) at (intersection of p3--e3 and p4--e4) ;

\coordinate (E2) at (intersection of p2--e2 and p1--{$(p1)+(p4)-(p2)$});
\coordinate (E5) at (intersection of p5--e5 and p6--{$(p6)+(p3)-(p5)$});

\coordinate (p0) at ($(p2)+0.9*($(p3)-(E2)$)$);
\coordinate (p7) at ($(p5)+1.3*($(p4)-(E5)$)$);

\coordinate (E5p) at (intersection of p6--{$(p6)+(p2)-(p5)$} and p34--{$(p7)-(p5)$});

\coordinate (E2p) at (intersection of p1--{$(p1)+(p5)-(p2)$} and p34--{$(p0)-(p2)$});

\draw (p1)--(p3)--(p5)--(p7) (p0)--(p2)--(p4)--(p6) ;
\draw [dashed] (p6)--(p34)--(p1) (p2)--(p5);

\draw[green,thin,dashed] (p2)--(e34)--(p5);
\draw[green,thin] (p5)--(e4)--(p3)--(E2)--(p1) (p6)--(E5)--(p4)--(e3)--(p2);

\draw[blue] (p3)--(M)--(e3) (e4)--(M)--(p4) (E2)--(M)--(p2) (p5)--(M)--(E5p);
\draw[blue,dashed]  (p34)--(M)--(e34);
\draw[brown,thin] (p3)--(e2)--(p34)--(e5)--(p4) ;
\draw[green,thin,dashed] (p6)--(E5p)--(p34) (p1)--(E2p)--(p34);
\end{tikzpicture}
\endcomment
\caption{}\label{mainfig}
\end{figure}

The lines $l_2$ and $l_5$ occur both in the configuration of $n$ points
and of $n-1$ points, and also in the configuration formed by the five
points  $A_2$, $A_3$, $A_4$, $A_5$, $A_{3,4}$. 
Now $A_{3,4}\neq A_3$, as $A_{3,4}\in l_5$ but $A_3\notin l_5$;
similarly   $A_{3,4}\neq A_4$.

We verify the conditions
\eqref{ass1} -- \eqref{ass3} for the pentagon $A_2A_3A_4A_5A_{3,4}$. 
Most of them are 
conditions which also appear as conditions for 
$A_1$, $A_2$, $A_{3,4}$, $A_5$, \dots, $A_n$ or 
$A_1$, $A_2$, $A_3$, $A_4$, \dots, $A_n$.
For \eqref{ass1} we note that $A_3\notin l_{3,4}$,
as $A_2$, $A_3$ and $A_5$ are not collinear, because $A_2\notin l_4$; 
 likewise 
  $A_4\notin l_{3,4}$.  Also 
$A_{3,4}\notin l_4$, for otherwise $A_{3,4}=l_2\cap l_4=A_3$,
similarly $A_{3,4}\notin l_3$. For
\eqref{ass2} we  have $l_{3,4}\neq l_4$ (and similarly $l_{3,4}\neq l_3$)
because $A_2$, $A_3$ and $A_5$ are not collinear.

Therefore the 5-axes theorem 
applies
to the configuration $A_2$, $A_3$, $A_4$, $A_5$, $A_{3,4}$.
Its axes $\bar g_2$, $\bar g_3$, $\bar g_4$, $\bar g_5$ and $\bar g_{3,4}$
lie in a pencil. As $\bar g_3$ coincides with the axes $g_3$ of 
the configuration $A_1$, $A_2$, $A_3$, $A_4$, \dots, $A_n$, and likewise
$\bar g_4=g_4$, and $g_2$ and $g_5$ lie in a pencil with
$g_3$ and $g_4$, we find that also $\bar g_2=g_2$ and $\bar g_5=g_5$.
By the same argument as in the previous proof we conclude that
$g_5$ is also the axis through $A_5$  in the configuration 
$A_1$, $A_2$, $A_{3,4}$, $A_5$, \dots, $A_n$, and a similar statement
holds for $g_2$. The axes $\bar g_{3,4}$ is also the axis through $A_{3,4}$
in the configuartion of $n-1$ points.
Therefore  the $n-4$ axes $g_2$, $g_{3,4}$, $g_5$, \dots, 
$g_{n-2}$ lie in a pencil and by the induction hypothesis
the axes $g_1$,  $g_{n-1}$ and $g_n$ lie in the same  pencil, which is 
also the pencil of $g_2$, $g_3$, $g_4$, $g_5$, \dots, 
$g_{n-2}$.

If $A_{3,4}$ lies at infinity
or coincides with one of the other points,  or the configuration
$A_1$, $A_2$, $A_{3,4}$, $A_5$, \dots, $A_n$ 
does not satisfy the assumptions \eqref{ass1} -- \eqref{ass3},
we use the construction of lemma \ref{move} to replace 
$A_1$, $A_2$, $A_3$, $A_4$, $A_5$, \dots, $A_n$ by another
one $A_1'$, \dots, $A_n'$ with the same center, such that
$A_1'$, $A_2'$, $A_{3,4}'$, $A_5'$,
\dots, $A_n'$ does satisfy the assumptions. 
As mentioned earlier, the 
 new sequence need not be defined over the field $k$; it 
suffices for the induction that it is defined over a field extension.

Some of the assumptions 
\eqref{ass1} -- \eqref{ass3} 
for the configuration $A_1$, $A_2$, $A_{3,4}$, $A_5$, \dots, $A_n$
follow directly from the properties \eqref{ass1} -- \eqref{ass4}
of the $n$ points
$A_1$,  \dots, $A_n$, but for the others we have to modify the given
configuration. We do this step by step. 
At each step we maintain $n-2$ points from the previous step and move the other
two in a way that corrects one specific shortcoming  (it is here that we might 
have to make use of a field extension). We then relabel the points so that the 
resulting configuration is free of all previous shortcomings, yet has the same 
center.

We now list the conditions and discuss how to satisfy them. 
We treat the cases which are connected by the symmetry $A_n\mapsto
A_{7-n}$ together, postponing $A_{3,4}\notin l_{3,4}$ to the end.
\begin{itemize}
\item $l_2\nparallel l_5$.\\
This 
condition implies that the point $A_{3,4}=l_2\cap l_5$ is a finite point, 

as desired.
As $l_2\neq l_4$, $A_1\notin l_4$. Moving $A_3$ on $l_4$ means that 
the line $l_2$ moves in the pencil of lines through $A_1$, whereas $l_4$
does not change. Therefore, if we were given $l_2\parallel l_5$, we could make these lines
intersecting by moving $A_2$ and $A_3$. 
\item $A_1\neq A_{3,4}$ and  $A_6\neq A_{3,4}$.\\
If  $A_{3,4}=A_6$, then $l_2=\lijn(A_1,A_3)$ intersects $l_5=\lijn(A_4,A_6)$
in $A_6$. Moving $A_3$ on $l_4$ means that $l_2$ moves in the pencil of
lines with center $A_1$. As $A_6\neq A_1$, this means that $A_{3,4}$ 
moves. If $A_{3,4}=A_1$, we move instead $A_4$ on $l_3$.
\item $A_{3,4}\neq A_j$ for $j=7,\dots,n$.\\
If $A_j=l_2\cap l_5$, we move $l_2$ in the pencil of lines through $A_1$. 
\item $A_1\notin l_{3,4}$ and $A_6\notin l_{3,4}$.\\
If $A_1\in l_{3,4}$, we move $l_{3,4}$ in the pencil through $A_5$
by moving $A_2$ on $l_1$.
\item $A_{3,4}\notin l_1$ and $A_{3,4}\notin l_6$.\\
Moving  $A_2$ and $A_3$ means that $A_{3,4}$ moves on $l_5\neq l_6$,
while moving  $A_4$ and $A_5$ makes $A_{3,4}$ to move on $l_2\neq l_1$.
\item $l_1\neq l_{3,4}$ and $l_6\neq l_{3,4}$.\\
This first condition means that $A_2$, $A_5$ and $A_n$ are not collinear,
and the second that $A_2$, $A_5$ and $A_7$ are not collinear. For $n=7$ 
these conditions coincide and are satisfied because $l_6\neq l_1$.
Let $n>7$ and suppose $A_5 \in l_1$. 
Then $A_5=l_1\cap l_4$ ($l_1\neq l_4$
as $A_2\notin l_4$).
We can move $A_5$ and $A_6$, moving  $A_5$ on $l_4$ off $l_1$.
If $A_2\in l_6$, then moving  $A_5$ on $l_4$ moves
$l_6$  in the pencil of lines through $A_7$.
\item $l_2\neq l_5$. \\ 
This holds as $A_3\notin l_5$.
\item $l_2 \nparallel l_{3,4}$ and $l_5 \nparallel l_{3,4}$.\\
If $l_5 \parallel l_{3,4}$ we
move $A_2$ and $A_3$, moving $A_2$ on $l_1$. As $A_5\notin l_1$
by a previous step,
this means that $l_{3,4}$ moves, whereas $l_5$ does not move.
If $l_2 \nparallel l_{3,4}$ we move $A_4$ and $A_5$.
\end{itemize}

The last condition to be satisfied is $A_{3,4}\notin l_{3,4}$.
If $A_{3,4}\in l_{3,4}$, 
then $l_{3,4}$, $l_2$ and $l_5$ are concurrent
and $A_{3,4}=B_3=B_4$. Now the conditions for the 
degenerate case of the 5-axes theorem (Theorem \ref{degen})
are satisfied. We find that $\bar g_2$, $\bar g_3$, $\bar g_4$
and $\bar g_5$ lie in a pencil. We conclude that $\bar g_5=g_5$
also in this case. 

We compute the image of $A_{3,4}$ under the involution on $l_5$
determined by $A_4$, $l_4$ and $g_5$, both
when  $A_{3,4}\in l_{3,4}$ and  $A_{3,4}\notin l_{3,4}$. 
According to the proof of Lemma 
\ref{invo} we have to intersect the line through $A_{3,4}$, parallel to
$l_4$ with $g_5$ and connect the intersection point with $A_4$. Then we
draw parallel to this last line a  line through $A_5$. 
The construction of the axis $\bar g_5=g_5$ shows that the line
through $A_4$ is parallel to $\lijn(A_2,A_5)$. 
Therefore the image of $A_{3,4}$ is $B_4=l_5\cap\lijn(A_2,A_5)$. 

If $A_{3,4}=B_4$, then it is a fixed point of the involution
and by moving
$A_2$ on $l_1$  and $A_3$ on $l_4$ 
we move $A_{3,4}$  on $l_5$, so that it is no longer
a fixed point of the involution, and therefore $A_{3,4}\neq B_4$,
giving $A_{3,4}\notin l_{3,4}$.

This shows that we can satisfy all assumptions.
For the new configuration with the same center $M$ we can conclude
by the induction hypothesis that also $g_1$, $g_{n}$ and $g_{n-1}$ 
pass through $M$. This then also holds for the original configuration.
\end{proof}

\end{document}